\documentclass[11pt]{amsart}
\usepackage{amsmath,amssymb,amsthm,graphicx, url, verbatim, float,hyperref,ulem} %enumitem}
\usepackage{enumerate}
\usepackage{svg}
\usepackage{xcolor, relsize}
\usepackage{comment}

\newtheorem{lemma}{Lemma}[section]
\newtheorem{proposition}[lemma]{Proposition}
\newtheorem{theorem}[lemma]{Theorem}

\newtheorem{corollary}[lemma]{Corollary}
\newtheorem{question}[lemma]{Question}
\newtheorem{conjecture}[lemma]{Conjecture}

\newcommand{\bcon}{\begin{conjecture}}
\newcommand{\econ}{\end{conjecture}}
\newcommand{\bcor}{\begin{corollary}}
\newcommand{\ecor}{\end{corollary}}
\newcommand{\bdf}{\begin{definition}}
\newcommand{\edf}{\end{definition}}
\newcommand{\benu}{\begin{enumerate}}
\newcommand{\eenu}{\end{enumerate}}
\newcommand{\beq}{\begin{equation}}
\newcommand{\eeq}{\end{equation}}
\newcommand{\bexa}{\begin{example}}
\newcommand{\eexa}{\end{example}}
\newcommand{\bexe}{\begin{exercise}}
\newcommand{\eexe}{\end{exercise}}
\newcommand{\bfac}{\begin{fact}}
\newcommand{\efac}{\end{fact}}
\newcommand{\bite}{\begin{itemize}}
\newcommand{\eite}{\end{itemize}}
\newcommand{\blem}{\begin{lemma}}
\newcommand{\elem}{\end{lemma}}
\newcommand{\bmat}{\begin{matrix}}
\newcommand{\emat}{\end{matrix}}
\newcommand{\bprb}{\begin{problem}}
\newcommand{\eprb}{\end{problem}}
\newcommand{\bpro}{\begin{proposition}}
\newcommand{\epro}{\end{proposition}}

\newcommand{\bque}{\begin{question}}
\newcommand{\eque}{\end{question}}
\newcommand{\brem}{\begin{remark}}
\newcommand{\erem}{\end{remark}}
\newcommand{\bthm}{\begin{theorem}}
\newcommand{\ethm}{\end{theorem}}
\newtheorem*{namedtheorem}{\theoremname}

\newcommand{\bpr}{\begin{proof}}
\newcommand{\epr}{\end{proof}}

\theoremstyle{definition}
% The text of this environment is typesetted in roman letters
\newtheorem{definition}[lemma]{Definition}
\newtheorem{remark}[lemma]{Remark}
\newtheorem{example}[lemma]{Example}

\newcommand{\theoremname}{testing}

 \textwidth 6.00in 
\textheight 8.0000in 
\oddsidemargin 0.25in
\evensidemargin 0.25in

\newcommand{\Z}{\mathbb{Z}}

\newcommand{\Q}{\mathbb{Q}}
\newcommand{\C}{\mathbb{C}}

\newcommand{\slC}{\mathrm{SL}_2(\mathbb{C})}

%\title[Character variety and dimension of skein modules]{On the $\slC$-character variety and the dimensions of Kauffman bracket skein modules}

\title[Non-semisimple invariants and abelian classical shadows]{Non-semisimple quantum invariants and abelian classical shadows}
\author{Renaud Detcherry}

\address{Institut de Mathématiques de Bourgogne, UMR 5584 CNRS, Université Bourgogne Franche-Comté, F-2100 Dijon, France}
\email{renaud.detcherry@u-bourgogne.fr}

%%% BEGIN DOCUMENT
\begin{document}
\begin{abstract}Using the $U_q^Hsl_2$ non-semisimple invariants of $3$-manifolds at odd roots of unity, we construct maps on the Kauffman bracket skein module at $\zeta$ a root of unity of order twice an odd number, having any possible abelian non-central character as classical shadow.
\end{abstract}
\maketitle	
\section{Introduction}
\label{sec:intro}

For $\zeta$ a root of unity of order $2N$ where $N>3$ is odd, and $M$ a compact oriented $3$-manifold, let $S_{\zeta}(M)$ be the Kauffman bracket skein module evaluated at $A=\zeta.$ Let also $X(M)=\mathrm{Hom}(\pi_1(M),\slC  )//\slC$ be the $\slC$-character variety of $M.$ Skein modules of $3$-manifolds at roots of unity are deeply connected to $X(M),$ as discovered by Bonahon and Wong \cite{BW16}. Inspired by \cite{BW16}, we define:
\begin{definition}
 For $\chi \in X(M),$ we will say that a surjective map $f:S_{\zeta}(M) \rightarrow \C$ has \textit{classical shadow} $\chi$ if for all link $L$ and knot $K,$ we have 
$$f(L\cup T_N(K))=-\chi(K) f(L),$$
where $T_N$ is the $N$-th Chebychev polynomial, which is the unique polynomial in $\Z[z]$ satisfying $T_N(X+X^{-1})=X^N+X^{-N}.$
\end{definition}

The purpose of this note is to show the following:

\begin{theorem}
	\label{thm:main} For any $3$-manifold $M$ and any abelian non-central central $\chi \in X(M)$ there exists a surjective map $f:S_{\zeta}(M) \longrightarrow \C$ with classical shadow $\chi.$
\end{theorem}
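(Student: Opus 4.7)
The plan is to build $f$ as (a renormalization of) a non-semisimple $U_q^H sl_2$ invariant of the pair $(M,L)$ twisted by a $\C$-cohomology class on $M$ that realizes the abelian character $\chi$. First, since $\chi$ is abelian, it factors through $H_1(M;\Z)$ and takes values in a maximal torus of $\slC$; choosing a lift gives a class $\omega \in H^1(M;\C/2\Z)$ (or $\C/\Z$, depending on conventions) such that $\chi(\gamma) = -(t^{\omega(\gamma)}+t^{-\omega(\gamma)})$-type formulas hold for $t=\zeta$ or a chosen root, and the non-central hypothesis ensures $\omega$ is not $2$-torsion in cohomology, so admissible cocycles exist and the Costantino--Geer--Patureau-Mirand invariant is well defined.

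Next, for an oriented framed link $L\subset M$, I would set $f(L)$ to be (a normalization of) the CGP invariant $N_\zeta(M,L,\omega_L)$ where $L$ is colored by a fixed typical $U_q^Hsl_2$-module $V$ of weight chosen so that a meridian of any cabling remains admissible, and $\omega_L$ is the pull-back of $\omega$ along $M\setminus L\hookrightarrow M$ (after checking admissibility on all meridians; since $\omega$ is abelian and non-central, its restriction is admissible once $V$ has a generic weight). The standard ribbon-functor properties of the CGP construction give that $f$ satisfies the Kauffman bracket skein relations at $A=\zeta$ component by component, hence descends to $S_\zeta(M)$. Surjectivity will follow from exhibiting one link (e.g.\ a framed loop dual to a $1$-cycle realizing a nonzero value of $\omega$) on which $f$ is nonzero.

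The main content is the Chebyshev identity $f(L\cup T_N(K))=-\chi(K)\,f(L)$. The idea is that in the CGP category at $\zeta$ a root of unity of order $2N$, cabling a component by $T_N$ is equivalent, via transparency properties, to multiplication by the holonomy of $\omega$ along that component on the typical coloring; concretely, the $N$-th Chebyshev cable of $K$ colored by $V$ decomposes as $V$ tensor the $N$-th power of the monodromy of $\omega$ (computed inside the twisted Hopf algebra structure). Because the holonomy of $\omega$ along $K$ is designed to recover the $\slC$-character, this scalar equals $-\chi(K)$, with the sign coming from the shift by $N$ in the weight and the convention $T_N(X+X^{-1})=X^N+X^{-N}$. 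I would verify this first for $K$ a meridian in a solid-torus neighborhood, where the computation reduces to a character identity of typical $U_q^Hsl_2$-modules, and then extend by naturality of the invariant under embeddings.

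The main obstacle is this last transparency / Chebyshev identity: verifying that cabling with $T_N$ inside the CGP skein module of the solid torus indeed produces exactly the scalar $-\chi(K)$ on every typical module, and keeping track of signs and normalizations carefully enough that the induced map on $S_\zeta(M)$ is well defined. Once that identity is in hand, skein invariance, the Chebyshev property, and surjectivity fit together to produce the desired $f$.
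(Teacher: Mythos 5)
There is a genuine gap at the very start: the choice of coloring for the observable link $L$ is wrong, and this is not a cosmetic issue but one that kills the construction. You propose to color each component of $L$ by a fixed \emph{typical} module $V_\alpha$ with generic weight, and then pull back $\omega$ to $M\setminus L$. But meridians of link components are null-homologous in $M$, so the restriction of $\omega$ always evaluates to $\overline{0}\in\C/\Z$ on each meridian of $L$; compatibility then forces the color to lie in $\mathcal{C}_{\overline{0}}$, which excludes all typical modules $V_\alpha$ with $\alpha\notin\frac{N}{4}\Z$. Even if you circumvented the grading issue (say, by using $V_0$), you would still not get the Kauffman bracket skein relation: typical modules are $N$-dimensional, so $c_{V,V}$ does not satisfy a quadratic relation and there is no two-term Kauffman smoothing. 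The paper colors $L$ by the $2$-dimensional simple module $S_1\in\mathcal{C}_{\overline{0}}$ (and renormalizes by $(-1)^{\#L}$), which simultaneously fixes the compatibility of the triple and produces the Kauffman relation $\zeta\,c_{S_1,S_1}^{-1} - \zeta^{-1}c_{S_1,S_1}=(\zeta^{-2}-\zeta^{2})\mathrm{Id}$ with $A=\zeta=-q^{1/2}$. The ``non-semisimple'' data enters only through the Kirby colors $\Omega_\alpha$ (sums of typical $V_{\alpha+i}$) attached to the surgery link, not through the coloring of $L$ itself.

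You also identify the Chebyshev identity and surjectivity as ``the main obstacles'' but do not resolve either, and both require concrete input that is not just bookkeeping. For the Chebyshev identity, the precise statement needed is that the open Hopf link maps satisfy $\Phi_{T_N(S_1),V_\alpha}=(e^{4i\pi\alpha}+e^{-4i\pi\alpha})\mathrm{Id}_{V_\alpha}$ and $\Phi_{T_N(S_1),P_n}=2\,\mathrm{Id}_{P_n}$; the second of these is not obvious since $\mathrm{End}(P_n)$ has a nilpotent $h_n$, and one must check that the nilpotent part of $\Phi_{S_1,P_n}$ is killed by $T_N$ using $T_N'(q^{n+1}+q^{-n-1})=0$. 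Your appeal to ``transparency'' hides exactly this computation. For surjectivity, pointing at ``one link on which $f$ is nonzero'' begs the question: one must actually produce links (colored by polynomials in $S_1$) whose evaluation is nonzero, and the paper does this via Lagrange and Hermite interpolation to manufacture polynomials $Q_\alpha(S_1)$ selecting a single $V_\alpha$ from the Kirby color and $R_{\alpha,\beta,\pm}(S_1)$ realizing crossing changes, ultimately reducing to a nonvanishing Hopf chain link invariant. As written, your proposal does not reach any of these three pillars.
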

While Theorem \ref{thm:main} may be deduced from the results in the recent preprint \cite{KK22} about the Azumuya locus of skein algebras, we will construct the map $f$ in an alternative way. Indeed, we will show that such maps can be recovered from a version of Costantino--Geer--Patureau-Mirand's non semisimple quantum invariants \cite{CGP14}.

Building maps on $S_{\zeta}(M)$ realizing each possible classical shadow was a key ingredient in previous work of the author and Kalfagianni and Sikora \cite{DKS}, to give formulas for the dimension of skein modules of $3$-manifolds over $\Q(A).$ For central representations, such maps could be built from the (semisimple) Reshetikhin-Turaev invariants of $3$-manifolds, hence the present paper can be seen as the analogous construction in the non-semisimple case. 

The paper is organized as follows: Section \ref{sec:modules} and \ref{sec:trivgraphs} discuss the category of weight modules over the unrolled quantum group of $sl_2$ at odd roots of unity, and Section \ref{sec:3mfd-inv} recalls the construction  of \cite{CGP14} of non-semisimple $3$-manifolds invariants. Much of the material in those sections is directly inspired from \cite{CGP14} and \cite{CGP15}, who treated the case of even roots of unity. We don't claim any originality in those sections. Finally, Section \ref{sec:maps} is the heart of the paper and proves Theorem \ref{thm:main}. 

\textbf{Acknowledgements:} The author thanks François Costantino for helpful discussions, and Effie Kalfagianni and Adam Sikora for helpful discussions and comments on a preliminary version of this note. Over the course of this work, the first author was supported by the projects "NAQI-34T" (ANR-23-ERCS-0008) and ”CLICQ” of the R\'egion Bourgogne Franche-Comt\'e.

\section{Modules over $U_q^H\mathrm{sl}_2$}
\label{sec:modules}

Let $N$ be an odd integer and let $q=e^{\frac{4i\pi}{N}}.$ Note that $q$ is a primitive $N$-th root of unity, and that $\zeta=-e^{\frac{2i\pi}{N}}$ is a primitive $2N$-th root of unity. 

For $z\in \C$ we will also write $q^z$ for $e^{\frac{4iz\pi}{N}},$ and set:
$$\lbrace z \rbrace:=q^z-q^{-z}, \ [z]:=\frac{q^z-q^{-z}}{q-q^{-1}}.$$

We note that $[z]=0$ if and only if $z\in \frac{N}{4}\Z.$

We recall that the unrolled quantum group $U_q^Hsl_2$ is the algebra generated by $E,F,K,H$ with relations:
$$KE=q^2 EK, \ KF=q^{-2}FK, \ [H,E]=2E, \ [H,F]=-2F, \ [H,K]=0, \ [E,F]=\frac{K-K^{-1}}{q-q^{-1}}.$$
It has an Hopf algebra structure, with coproduct 
$$\Delta(E)=E\otimes 1 +K\otimes E, \ \Delta(F)=F\otimes 1 + K^{-1} \otimes F, \ \Delta(K)=K\otimes K, \Delta(H)=H\otimes 1 + 1 \otimes H,$$
and  counit $\varepsilon:$ 
$$\varepsilon(E)=\varepsilon(F)=\varepsilon(H)=0, \ \varepsilon(K)=1,$$
and antipode:
$$S(E)=-EK^{-1}, \ S(F)=-KF, \ S(K)=K^{-1}, S(H)=-H.$$

A weight module $V$ for $U_q^H\mathrm{sl}_2$ will be a finite dimensional $U_q^H\mathrm{sl}_2$-module which is spanned by eigenvectors of $H,$ and on which $K$ acts as $q^H.$ An eigenvector of $H$ with eigenvalue $\lambda$ will be called a weight vector of weight $\lambda.$

Let $\mathcal{C}$ be the monoidal category of $U_q^H\mathrm{sl}_2$-weight modules. For $\overline{\alpha} \in \C/\Z,$ let $\mathcal{C}_{\overline{\alpha}}$ be the full subcategory of weight modules spanned by weight vectors with weights $\beta$ such that $\beta \equiv \alpha \ (\mathrm{mod} \ \Z).$

Since $\Delta(H)=H\otimes 1 + 1 \otimes H,$ we have $\mathcal{C}_{\overline{\alpha}}\otimes \mathcal{C}_{\overline{\alpha'}} \subset \mathcal{C}_{\overline{\alpha + \alpha'}},$ for any $\overline{\alpha},\overline{\alpha'}\in \C/\Z.$

Also, since $S(H)=-H,$ the dual of an object in $\mathcal{C}_{\overline{\alpha}}$ belongs to $\mathcal{C}_{\overline{-\alpha}}.$ Therefore $\mathcal{C}$ a $\C/\Z$-graded monoidal category.

A braiding $c_{V,W}$ between two objects $V,W\in \mathcal{C}$ can be defined (see \cite{GPM13}) by the formula $c_{V,W}=\tau \circ R,$ where $\tau(v\otimes w)=w \otimes v$ and $R: V\otimes W \longrightarrow V\otimes W$ is the map defined by the formula:
\begin{equation}\label{eq:Rmatrix}R=q^{(H\otimes H)/2}\underset{n=0}{\overset{\infty}{\sum}}q^{n(n-1)/2}\frac{(q-q^{-1})^n}{[n]!}E^n\otimes F^n,\end{equation}
where $q^{(H\otimes H)/2}$ acts as multiplication by $q^{\lambda\lambda'/2}$ on $v\otimes w$ when $v,w$ are weight vectors of weights $\lambda,\lambda'.$

Note that despite the infinite sum, this yields a well defined linear map on $V\otimes W.$  Indeed, the actions of $E$ and $F$ on any weight module is nilpotent, since $E$ (resp. $F$) sends a weight vector of weight $\lambda$ to a weight vector of weight $\lambda+2$ (resp. $\lambda-2$).

Moreover, a twist $\theta_V$ is defined (see \cite{GPM13}) on any object of $V$ by the formula:
\begin{equation}\theta_V^{-1}=K^{N-1} \underset{n=0}{\overset{N-1}{\sum}}\frac{(q-q^{-1})^n}{[n]!}q^{n(n-1)/2}S(F^n)q^{-H^2/2}E^n,\label{eq:twist}\end{equation}
where again $q^{-H^2/2}$ acts on a weight vector $v$ of weight $\lambda$ as multiplication by $q^{-\lambda^2/2}.$

Finally, the category $\mathcal{C}$ is \textit{pivotal}, for the following evaluations and coevaluations morphisms: if $\lbrace v_i \rbrace$ is a basis of $V\in \mathcal{C}$ and $\lbrace v_i^* \rbrace$ is the dual basis of $V^*,$ we set

$$coev_V :\C \longrightarrow V\otimes V^* , \ \textrm{where} \ coev_V(1)=\underset{i}{\sum} v_i \otimes v_i^*$$
$$ev_V: V^*\otimes V \longrightarrow \C, \ \textrm{where} \ ev_V(f\otimes v)=f(v)$$
and $\widetilde{coev}_V=(Id_{V^*}\otimes \theta_V)\circ c_{V,V^*} \circ coev_V,$ $\widetilde{ev}_V=ev_V \circ c_{V,V^*}\circ (\theta_V\otimes Id_{V^*}).$
Plugging in Equation \ref{eq:Rmatrix} and \ref{eq:twist}, one gets
$$\widetilde{coev}_V(1)=\underset{i}{\sum}K^{N-1}v_i \otimes v_i^*$$
$$\widetilde{ev}_V(v\otimes f)=f(K^{1-N}v_i).$$

The \textit{quantum trace} of a morphism $f\in \mathrm{End}(V)$ is then by definition 
$$qtr(f)=\widetilde{ev}_V \circ (Id_{V^*}\otimes f)\circ coev_V(1)=\mathrm{Tr}(K^{1-N}f).$$
and the \textit{quantum dimension} of an object $V\in \mathcal{C}$ is $qtr(Id_V).$

We will need to define some specific $U_q^Hsl_2$-modules:
\begin{definition}\label{def:modules} We define modules $V_{\alpha},S_n,P_n,\varepsilon \in \mathcal{C}$ by:
\begin{enumerate}
	\item For $\alpha\in \C,$ the module $V_{\alpha} \in \mathcal{C}_{\overline{\alpha}}$ has basis $\lbrace v_0,\ldots,v_{N-1}\rbrace$ and module structure:
	$$Hv_j=(\alpha+N-1-2j)v_j, \ Fv_j=v_{j+1}, \ Fv_{N-1}=0, Ev_j=[j][\alpha-j]v_{j-1}, \ Ev_0=0.$$
	
	\item For $n\in \lbrace 0,\ldots, N-1\rbrace,$ the module $S_n\in \mathcal{C}_{\overline{0}}$ has basis $\lbrace e_0,\ldots,e_n\rbrace$ and module structure:
	$$He_j=(n-2j)e_j, \ Fe_j=e_{j+1}, \ Fe_n=0,\ Ee_j=[j][n+1-j]e_{j-1}, \ Ee_0=0.$$
	
	\item For $n\in \lbrace 0 ,\ldots,N-2\rbrace,$ the module $P_n\in \mathcal{C}_{\overline{0}}$ has basis $\lbrace x_0,\ldots,x_{N-1},y_0,\ldots,y_{N-1}\rbrace$ and module structure:
	$$Hx_j=(2N-2-n-2j)x_j, \ Fx_j=x_{j+1}, \ Fx_{N-1}=0, \ Ex_j=-[j][j+1+n]x_{j-1}, \ Ex_0=0$$
	$$Hy_j=(n-2j)y_j, \ Fy_j=y_{j+1}, \ Fy_{N-1}=0, \ Ey_0=x_{N-2-n}, \ Ey_{n+1}=x_{N-1},$$
	$$Ey_j=[j][n+1-j]y_{j-1} \ \textrm{for} \ j \neq 0,n+1.$$ 
	\item For $k\in \Z,$ the module $\varepsilon^l$ is a one dimensional module spanned by $v$ such that $Ev=Fv=0,$ and $Hv=l\frac{N}{4}v$ (and thus $Kv=(-1)^lv$). Note that $\varepsilon^l \in \mathcal{C}_{\overline{lN/4}}.$
\end{enumerate}
\end{definition}
Let us introduce the notation 
$$\overset{..}{\C}=\left(\C \setminus \frac{1}{4}\Z \right)\cup \frac{N}{4}\Z.$$
The classification of $U_q^H\mathrm{sl}_2$-weight modules is given as follows:

\begin{theorem}\label{thm:classifModules}\cite{CGP15}
	The indecomposable modules of $\mathcal{C}$ are the modules $V_{\alpha}$ for $\alpha \in \overset{..}{\C},$ the module $S_n \otimes \varepsilon^l$ for $n\in \lbrace 0,\ldots N-2 \rbrace$ and $l\in \Z,$ and the modules $P_n\otimes \varepsilon^l$ for $n\in \lbrace 0,\ldots N-2 \rbrace$ and $l\in \Z.$ 
	
	Moreover, $\varepsilon$ is an invertible module, $S_n$ and $V_{\alpha}$ are simple modules, $V_{\alpha}$ and $P_n$ are projective modules.
	
\end{theorem}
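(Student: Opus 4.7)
The plan is to adapt the strategy for classifying weight modules over $U_q^H\mathrm{sl}_2$ at even roots of unity from \cite{CGP15} to the present odd-$N$ setting. The key technical tool is that $E$ raises $H$-weight by $2$ and $F$ lowers it, so both act nilpotently on any finite-dimensional weight module; in particular every such module contains a highest weight vector $v$, meaning an $H$-eigenvector of some weight $\lambda$ satisfying $Ev=0$. A standard induction from the commutation relations produces
\[
E F^j v = [j]\,[\lambda - j + 1]\, F^{j-1} v,
\]
and this identity controls the entire structure of the cyclic submodule generated by $v$. Since $N$ is odd, $[j]=0$ only when $j\in N\Z$, and $[\lambda + 1 - j]=0$ only when $\lambda + 1 - j \in \frac{N}{4}\Z$; these two sources of vanishing drive the case analysis.

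First I would classify the simple weight modules. For a simple $V$ with highest weight vector $v$ of weight $\lambda$, a case split on $\lambda$ determines the isomorphism type. If $\lambda \notin \frac{1}{4}\Z$, or if $\lambda \in (N-1) + \frac{N}{4}\Z$, then $[\lambda - j + 1]$ is nonzero for every $j \in \{1,\ldots,N-1\}$, so $F v, F^2 v, \ldots, F^{N-1}v$ are all nonzero and the cyclic submodule is forced to be $N$-dimensional; one checks its action matches that of $V_\alpha$ with $\alpha = \lambda - N + 1 \in \overset{..}{\C}$. Otherwise $\lambda \in n + \frac{N}{4}\Z$ for a unique $n \in \{0,\ldots,N-2\}$, say $\lambda = n + lN/4$ with $l \in \Z$, and the first vanishing of $[\lambda - j + 1]$ at $j = n+1$ forces $F^{n+1}v$ to generate a proper submodule; by simplicity $F^{n+1}v = 0$ and $V \cong S_n \otimes \varepsilon^l$. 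Since $\varepsilon$ is plainly invertible (being one-dimensional), and $V_\alpha$ and $S_n$ are simple because $[j][\alpha - j]$ and $[j][n+1-j]$ are nowhere zero in the relevant ranges, the simples are exactly as listed.

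Next I would classify the non-simple indecomposables by producing and exploiting projective covers. By Fitting's lemma any indecomposable $W \in \mathcal{C}$ has a local endomorphism ring, hence a unique simple head $S$, making $W$ a quotient of the projective cover $P(S)$. Direct inspection of Definition \ref{def:modules} shows that $P_n \otimes \varepsilon^l$ has $S_n \otimes \varepsilon^l$ as its unique simple quotient and as its socle, identifying it as $P(S_n \otimes \varepsilon^l)$; moreover, a weight-by-weight analysis of the submodule lattice of $P_n$ shows its only indecomposable quotients are $P_n$ and $S_n$ themselves, because the socle $S_{N-2-n}$ and the two $S_n$ composition factors in the radical are linked by the crossing action $E y_0 = x_{N-2-n}$ in a way that leaves no intermediate indecomposable quotient. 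For the simples $V_\alpha$, a direct Ext computation in $\mathcal{C}$ shows that they admit no non-trivial extensions: when $\alpha \notin \frac{1}{4}\Z$ the block is semisimple since the weight sets of distinct simples in $\mathcal{C}_{\overline{\alpha}}$ do not align under the $E,F$-action, and when $\alpha \in \frac{N}{4}\Z$ the only possible extension partners are the $S_n\otimes \varepsilon^l$ and $P_n\otimes\varepsilon^l$ lying in the same graded component, and the explicit action formulas forbid a non-split extension. Hence $V_\alpha$ is its own projective cover, and combined with $P(S_n\otimes\varepsilon^l)= P_n\otimes\varepsilon^l$ and the quotient analysis above, every indecomposable falls in the stated list.

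The main obstacle will be this last Ext computation for $V_\alpha$ with $\alpha \in \frac{N}{4}\Z$: these simples have vanishing quantum dimension and coexist with the non-semisimple $P_n\otimes\varepsilon^l$ in the same $\C/\Z$-graded component, so are \emph{a priori} at risk of being composition factors of the $P_n\otimes\varepsilon^l$ that do not split off as direct summands. Ruling this out reduces to tracing through the off-diagonal action $Ey_0 = x_{N-2-n}$ in Definition \ref{def:modules} and verifying that no pair of weight vectors in $P_n\otimes\varepsilon^l$ spans a $V_\alpha$-subquotient that could be split off—a careful but routine case analysis on the shape of the Loewy filtration.
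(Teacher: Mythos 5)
Your high-level strategy — classify simples by highest weight, then attack the remaining indecomposables via projective covers — is the natural one and matches what the paper's sketch (which simply defers to \cite{CGP15}) has in mind, and your simple-module classification is correct. However, the structural analysis of $P_n$ on which your second step rests contains concrete errors. From Definition \ref{def:modules}(3) one has $E x_{N-1-n} = -[N-1-n][N]\,x_{N-2-n} = 0$, so the vectors $x_{N-1-n},\ldots,x_{N-1}$ span a simple submodule of highest weight $n$; hence the socle of $P_n$ is $S_n$, not $S_{N-2-n}$ as you assert (contradicting your own statement one sentence earlier). The composition factors of $P_n$ are $S_n$ (socle), $S_{N-2-n}\otimes\varepsilon^{4}$, $S_{N-2-n}\otimes\varepsilon^{-4}$, and $S_n$ (head), so there is a single $S_n$ factor in the radical, not two. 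Your key claim that the only indecomposable quotients of $P_n$ are $P_n$ and $S_n$ is also false: the quotient by the submodule $X=\mathrm{span}(x_0,\ldots,x_{N-1})$ has one-dimensional weight spaces, highest weight vectors only at weights $n$ and $-n-2$, and exactly three submodules, so it is an indecomposable module of dimension $N$ distinct from $P_n$ and $S_n$.

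The second gap is that you do not actually prove projectivity. Asserting that $P_n\otimes\varepsilon^l$ \emph{is} the projective cover $P(S_n\otimes\varepsilon^l)$ because it has $S_n\otimes\varepsilon^l$ as head and socle is not a proof — a self-dual indecomposable with simple head and socle need not be projective. Likewise, the statement that $V_\alpha$ for $\alpha\in\frac{N}{4}\Z$ has no non-split extensions is announced rather than established, and you yourself flag this as the main difficulty. Establishing projectivity of $V_\alpha$ for $\alpha\notin\frac{1}{4}\Z$ (for instance via semisimplicity of the generic graded piece $\mathcal{C}_{\overline{\alpha}}$), then deducing projectivity of $P_n$ and of the Steinberg-type $V_\alpha$, $\alpha\in\frac{N}{4}\Z$, as direct summands of tensor products $V_\beta\otimes V_\gamma$ with $\beta,\gamma$ generic, is the real content of the adaptation from \cite{CGP15}; your write-up names the obstacle without resolving it.
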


\textbf{Sketch of proof:}
	Our conventions differ slightly from the ones used in \cite{CGP15}, by the fact that we used $q=e^{\frac{4i\pi}{N}}$ instead of $q=e^{\frac{2i\pi}{N}}$ and the grading group is $\C/\Z$ instead of $\C/2\Z.$ However, from the fact that $[\alpha]=0$ if and only if $\alpha \in \frac{N}{4}\Z,$ one can readily check that all the modules $S_n$ and the modules $V_{\alpha}$ for $\alpha \in \overset{..}{\C}$ are generated by any of their weight vectors and hence are simple. Then one can check that the family 
	$$\lbrace V_{\alpha}, \alpha \in \overset{..}{\C}\rbrace \cup \lbrace S_n \otimes \varepsilon^l, n\in \lbrace 0,\ldots,N-1\rbrace, l\in \Z \rbrace$$ provides a simple object of any possible highest weight, thus contains all simple objects in $\mathcal{C}.$
	Similarly, one can adapt the arguments in \cite{CGP15} to check that the only missing indecomposable modules are the $P_n \otimes \varepsilon^l$ and that they are projective.
$\square$

\medskip

We will also need the following lemma about tensor products of such modules:

\begin{lemma}\label{lemma:tensorprod}
	Let $I$ be the set of modules which are direct sums of modules in the family $$\lbrace V_{\alpha}, \alpha \in \overset{..}{\C} \rbrace \cup \lbrace P_i\otimes \varepsilon^l, 0\leq i \leq N-2, l \in \Z \rbrace.$$ Then $I$ is an ideal for the tensor product operation.
\end{lemma}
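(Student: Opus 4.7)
The plan is to show that $I$ coincides with the class of projective modules in $\mathcal{C}$, and then to use the fact that in a rigid monoidal category the tensor product of a projective module with any object is projective. The argument splits into three steps: (i) every module in $I$ is projective; (ii) projectivity is preserved under tensor product with arbitrary objects of $\mathcal{C}$; (iii) every projective module lies in $I$. Combining (i)--(iii) immediately yields the lemma, since for $V \in I$ and $W \in \mathcal{C}$ the tensor product $V \otimes W$ is projective by (i) and (ii), hence in $I$ by (iii).

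Step (i) is immediate from Theorem~\ref{thm:classifModules}, which asserts that $V_\alpha$ (for $\alpha \in \overset{..}{\C}$) and $P_n \otimes \varepsilon^l$ are projective, combined with the fact that direct sums of projective modules are projective.

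Step (ii) uses rigidity of $\mathcal{C}$: since each object $X$ has a dual $X^*$, the endofunctor $-\otimes X^*$ is both a left and right adjoint of $-\otimes X$ and is therefore exact on $\mathcal{C}$. For $P$ projective, the natural isomorphism
$$\mathrm{Hom}_{\mathcal{C}}(P \otimes X, -) \cong \mathrm{Hom}_{\mathcal{C}}(P, - \otimes X^*)$$
expresses $\mathrm{Hom}_{\mathcal{C}}(P \otimes X, -)$ as a composition of exact functors, so $P \otimes X$ is projective.

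For step (iii), since every weight module is finite-dimensional, Krull--Schmidt applies and any projective decomposes into indecomposable projective summands. By Theorem~\ref{thm:classifModules} the indecomposables in $\mathcal{C}$ are $V_\alpha$, $S_n \otimes \varepsilon^l$, and $P_n \otimes \varepsilon^l$, so one only has to rule out the $S_n \otimes \varepsilon^l$ ($n \in \{0,\ldots,N-2\}$) as projective summands. I expect this to be the main subtle point: using Definition~\ref{def:modules}, one produces a surjection $\pi: P_n \otimes \varepsilon^l \twoheadrightarrow S_n \otimes \varepsilon^l$ given by $y_j \mapsto e_j$ for $0 \leq j \leq n$ and sending the remaining basis vectors to $0$ (a short check using $Ey_0 = x_{N-2-n}$ and $Ey_{n+1}=x_{N-1}$ confirms this is a module map). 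Since $x_0 \in \ker \pi \neq 0$ and $P_n \otimes \varepsilon^l$ is indecomposable, this surjection cannot split; hence $S_n \otimes \varepsilon^l$ is not projective. Thus every indecomposable projective lies in $I$, completing step (iii) and the proof.
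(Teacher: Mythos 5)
Your argument is correct and follows the same core strategy as the paper: identify $I$ with the class of projective objects of $\mathcal{C}$, then use pivotality to deduce the ideal property. The paper, however, disposes of both steps by citation — it reads off from Theorem~\ref{thm:classifModules} that $I$ is exactly the set of projectives, and then invokes Lemma~17 of \cite{GPV} for the fact that projective objects in a pivotal category form a tensor ideal. You replace both citations with direct arguments: the adjunction $\mathrm{Hom}(P\otimes X,-)\cong\mathrm{Hom}(P,-\otimes X^*)$ gives the ideal property in step~(ii), and the explicit non-split surjection $P_n\otimes\varepsilon^l\twoheadrightarrow S_n\otimes\varepsilon^l$ in step~(iii) settles that the $S_n\otimes\varepsilon^l$ are not projective, a fact Theorem~\ref{thm:classifModules} leaves implicit. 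One small point worth making explicit in step~(ii): since $\mathcal{C}$ is not strict-symmetric, the ideal condition a priori requires both $P\otimes X$ and $X\otimes P$ to be projective; your adjunction handles one side, and the other follows because $\mathcal{C}$ is braided (or, alternatively, by a mirror adjunction using the other dual). With that remark added, your proof is a complete and self-contained version of what the paper outsources to references.
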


\begin{proof}From Theorem \ref{thm:classifModules}, $I$ corresponds exactly to the set of projective modules in $\mathcal{C}.$ Then this follows from the fact that $\mathcal{C}$ is a pivotal category and Lemma 17 of \cite{GPV}, which states that projective objects in a pivotal category form an ideal.
\end{proof}
The following definition will be useful:
\begin{definition}
	\label{def:character} For $V \in \mathcal{C}$ the character $\chi_V$ of $V$ is the following element of $\Z[\C]:$
	$$\chi_V(X)=\underset{t\in \C}{\sum} (\dim V(t))X^t,$$
	where $V(t)$ is the subspace of weight vectors of weight $t.$
\end{definition}
Note that from the fact that $\Delta(H)=H\otimes 1 +1\otimes H,$ it follows from the character is multiplicative: $\chi_{V\otimes W}=\chi_V\chi_W$ for any $V,W\in \mathcal{C}.$

\begin{lemma}
	\label{lemma:tensorprod2} We have, for any $\alpha,\beta \in \overset{..}{\C}$ such that $\alpha+\beta \notin \frac{1}{4}\Z,$
	$$V_{\alpha}\otimes V_{\beta}\simeq \underset{k\in H_N}{\bigoplus} V_{\alpha+\beta+k}.$$
	Moreover, for any $\alpha\in \overset{..}{\C},$ we have
	$$V_{\alpha}\otimes V_{-\alpha} \simeq V_0\otimes V_0 \simeq V_0 \oplus \underset{n=0}{\overset{\frac{N-3}{2}}{\bigoplus}}P_{2n}.$$
\end{lemma}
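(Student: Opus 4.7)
The plan is to combine the projectivity of $V_\alpha$ (Theorem \ref{thm:classifModules}) with the ideal property of projectives (Lemma \ref{lemma:tensorprod}) to reduce the problem to matching characters. Since $V_\alpha$ is projective, $V_\alpha \otimes V_\beta$ is projective, hence a direct sum of the indecomposable projectives listed in Theorem \ref{thm:classifModules}: the simples $V_\gamma$ with $\gamma \in \overset{..}{\C}$ and the $P_n \otimes \varepsilon^l$ with $n \in \{0,\ldots,N-2\}$, $l \in \Z$. The $\C/\Z$-grading forces every indecomposable summand to lie in $\mathcal{C}_{\overline{\alpha+\beta}}$.

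For the generic case $\alpha+\beta \notin \frac{1}{4}\Z$, no $P_n \otimes \varepsilon^l$ can appear: it lies in $\mathcal{C}_{\overline{lN/4}}$, and $lN/4 \bmod \Z$ always belongs to $\frac{1}{4}\Z/\Z$ since $N$ is odd. Thus $V_\alpha \otimes V_\beta$ is semisimple, a sum of simple modules $V_\gamma$. Using $\chi_{V_\alpha}(X) = X^\alpha\sum_{j=0}^{N-1} X^{N-1-2j}$, the top weight $\alpha+\beta+2(N-1)$ of $V_\alpha \otimes V_\beta$ has multiplicity one, forcing exactly one copy of $V_{\alpha+\beta+N-1}$. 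Peeling off its character and repeating for each successive unmatched top weight yields the decomposition $\bigoplus_{k \in H_N} V_{\alpha+\beta+k}$ by descending induction.

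For $V_0 \otimes V_0$, the module is projective in $\mathcal{C}_{\bar 0}$ with weights in $2\Z \cap [-2(N-1),2(N-1)]$. Among the simples $V_\gamma$ with $\gamma \in \overset{..}{\C} \cap \Z$, only $\gamma = 0$ fits this range, since $\overset{..}{\C} \cap \Z = N\Z$ for $N$ odd. Among the $P_n \otimes \varepsilon^l$, the shift $lN/4$ in weights pushes the top weight out of the allowed range unless $l = 0$, and then weight-parity leaves only $P_{2m}$ for $m = 0,\ldots,(N-3)/2$. Writing $V_0 \otimes V_0 \simeq aV_0 \oplus \bigoplus_m c_m P_{2m}$, a descending-induction character computation at each highest weight $2N-2-2m$ forces $c_m = 1$ for every $m$, and matching the multiplicity of weight zero then gives $a = 1$. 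The same argument applied to $V_\alpha \otimes V_{-\alpha}$, whose character equals $\chi_{V_0}^2$ since $\chi_{V_\alpha}(X) = X^\alpha \chi_{V_0}(X)$, yields $V_\alpha \otimes V_{-\alpha} \simeq V_0 \otimes V_0$.

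The main obstacle is pruning the list of possible indecomposable summands in the $V_0 \otimes V_0$ case; once only $V_0$ and the $P_{2m}$ remain, the character identity reduces to the elementary splitting $\sum_{j=0}^{N-1} y^j = y^{(N-1)/2} + \sum_{n=0}^{(N-3)/2}(y^n + y^{N-1-n})$.
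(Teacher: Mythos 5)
Your proof is correct and takes essentially the same route as the paper: both reduce to comparing characters of projective modules, using that the tensor product of projectives is projective and that the decomposition into indecomposable projectives is read off from the character. Where the paper simply asserts that characters of indecomposable projectives are linearly independent and determine the module, you supply the justification explicitly via the $\C/\Z$-grading constraint on admissible summands and a descending induction on top weights.
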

\begin{proof}
	Notice that $\chi_{V_{\alpha}}(X)=X^\alpha \left(\frac{X^N-X^{-N}}{X-X^{-1}}\right)$ and $\chi(P_n)(X)=(X^{N-1-n}+X^{n+1-N})\left(\frac{X^N-X^{-N}}{X-X^{-1}}\right).$
	It follows that the characters of projective indecomposable modules in $\mathcal{C}$ are linearly independent, and that a projective module is determined by its character.
	Hence the tensor product decompositions follow from the corresponding easy to check identities on the characters:
	$$\chi_{V_{\alpha}}\chi_{V_{\beta}}=\underset{k \in H_N}{\sum} \chi_{V_{\alpha+\beta+k}}$$
	$$\chi_{V_{\alpha}}\chi_{V_{-\alpha}}=\chi_{V_0}^2=\chi_{V_0}+\underset{n=0}{\overset{\frac{N-3}{2}}{\sum}} \chi_{P_{2n}}.$$
\end{proof}

Finally, we will need the description of the endomorphism spaces of indecomposable modules. 
When $W$ is simple, $\mathrm{End}(W)$ is just spanned by $id_W.$ So, let us also describe $\mathrm{End}(P_n),$ for $n\in \lbrace 0,\ldots,N-2 \rbrace.$

\begin{lemma}
	\label{lemma:endPn}
	For $n\in \lbrace 0,\ldots N-2\rbrace,$ let $h_n$ be the non-zero endomorphism of $P_n$ defined by $h_n(y_i)=x_{N-1-n+i}$ for $i\in \lbrace 0,\ldots,n \rbrace$ and $h_n$ vanishes on all other weight vectors of $P_n.$ Then $h_n$ is nilpotent of order $2,$ and moreover, we have 
	$$\mathrm{End}(P_n)=\C Id_{P_n} \oplus \C h_n.$$
\end{lemma}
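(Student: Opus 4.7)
The plan is to exploit that any $\phi\in\mathrm{End}(P_n)$ commutes with $H$, hence preserves the weight decomposition of $P_n$. Listing weights, $y_j$ has weight $n-2j$ and $x_j$ has weight $2N-2-n-2j$; these overlap exactly on $\{n,n-2,\ldots,-n\}$, producing $(n+1)$ two-dimensional weight spaces spanned by the pairs $(y_j,x_{N-1-n+j})$ for $0\leq j\leq n$, while all other weight spaces are one-dimensional.

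I would parametrize $\phi$ on the top two-dimensional weight space as $\phi(y_0)=ay_0+bx_{N-1-n}$ and $\phi(x_{N-1-n})=cy_0+dx_{N-1-n}$. Applying $F$ repeatedly (using $Fy_j=y_{j+1}$ and $Fx_j=x_{j+1}$) propagates these formulas with the \emph{same} constants on every two-dimensional weight space. Since $Fx_{N-1}=0$ while $Fy_n=y_{n+1}$, one further application of $F$ forces $cy_{n+1}=0$, so $c=0$, and $\phi(y_{n+1})=ay_{n+1}$, extending by $F$ to $\phi(y_j)=ay_j$ for all $j>n$. To pin down $d$, I would apply $E$ to $\phi(y_0)$: the relations of Definition \ref{def:modules} combined with $[N]=0$ give $Ex_{N-1-n}=0$, whence $\phi(x_{N-2-n})=ax_{N-2-n}$. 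Since the weight space of $x_{N-2-n}$ is one-dimensional and $Fx_{N-2-n}=x_{N-1-n}$, this forces $d=a$; propagating in both directions via $E,F$ then extends $\phi$ to multiplication by $a$ on the entire $x$-tower. Altogether $\phi=a\cdot\mathrm{Id}_{P_n}+b\cdot h_n$.

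Conversely, one verifies directly that $h_n$ is an $U_q^H\mathrm{sl}_2$-endomorphism: commutation with $H,K,F$ is immediate from the weights and $Fx_j=x_{j+1}$, while $E$-compatibility on $y_j$ for $1\leq j\leq n$ reduces to the identity $[N-1-n+j][N-n+j]=[n+1-j][j]$, which follows from $[N-k]=-[k]$. Nilpotency of order $2$ is then immediate, since $h_n$ takes values in $\mathrm{span}(x_j)$, on which it vanishes. The main obstacle is the combinatorial bookkeeping of weights together with the root-of-unity identities; the decisive step is the vanishing $Ex_{N-1-n}=0$, which is what forces the two diagonal entries $a$ and $d$ to coincide and collapses the a priori four-parameter family $(a,b,c,d)$ to the desired two-parameter family.
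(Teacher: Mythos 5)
Your proof is correct and reaches the conclusion by a route that is related to but organized differently from the paper's. The paper first establishes that $P_n$ is cyclic, generated by $y_0$ (via $F$-descent on the $y$-tower, $Ey_0=x_{N-2-n}$, then $E$-ascent and $F$-descent on the $x$-tower, using $[i][n+1+i]\neq 0$ in the relevant range); an endomorphism is then determined by its value on $y_0$, which lies in the two-dimensional weight space spanned by $y_0$ and $x_{N-1-n}$, giving $\dim\mathrm{End}(P_n)\leq 2$ at once. You instead run the constraint analysis across the full weight decomposition: you identify the a priori four parameters $(a,b,c,d)$ on the two-dimensional weight spaces, kill $c$ using $Fx_{N-1}=0$, force $d=a$ using $Ex_{N-1-n}=0$, and propagate by $E,F$ to the one-dimensional weight spaces. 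Both arguments hinge on the same two vanishings and the same weight decomposition; the paper's version is shorter once cyclicity is in hand, while yours is more explicit and does not need cyclicity as a separate claim. One small slip in your verification that $h_n$ is a morphism: the identity governing $E$-compatibility on $y_j$ for $1\leq j\leq n$ should be $-[N-1-n+j]\,[N+j]=[j]\,[n+1-j]$, coming from $Ex_k=-[k][k+1+n]x_{k-1}$ with $k=N-1-n+j$ together with $[N+j]=[j]$ and $[N-k]=-[k]$; the relation $[N-1-n+j][N-n+j]=[n+1-j][j]$ you wrote is not an identity (the second factor should be $[N+j]$, and there is an overall sign). The corrected identity follows from exactly the tools you cite, so the conclusion stands.
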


\begin{proof}
	It is clear that $h_n$ is nilpotent of order $2,$ and the fact that $h_n\in \mathrm{End}(P_n)$ can be directly checked from the formulas of Definition \ref{def:modules} for the module structure on $P_n.$ Thus we need only to check that $\mathrm{End}(P_n)$ is spanned by $id_{P_n}$ and $h_n.$
	
	To see that, first we claim $P_n$ is generated by  $y_0.$ Indeed, if $y_0$ belongs to a submodule $V$ of $P_n,$ applying $F$ repeatedly we get that $V$ contains all weight vectors $y_i.$ Moreover, $V$ also contains $x_{N-2-n}=Ey_0.$ Applying $E$ repeatedly we get that $V$ contains $x_0$ (since $[i][n+1+i]\neq 0$ for $i\in \lbrace 1,\ldots,N-2-n\rbrace$), and then applying $F$ repeatedly we get that $V$ contains all weight vectors $x_i.$
	
	Now, an endomorphism $f$ of $P_n$ must map the weight vectors $y_0$ to a weight vector of the same weight, so one must have $f(y_0)=\lambda y_0+ \mu x_{N-1-n}$ for some constants $\lambda,\mu \in \C.$ Since $y_0$ generates $P_n,$ this implies that $f=\lambda Id_{P_n}+\mu h_n.$
\end{proof}
\section{Invariants of framed trivalent graphs}
\label{sec:trivgraphs}

The category $\mathcal{C}$ being a ribbon category, we get from the Reshetikhin-Turaev functor an invariant of framed oriented links with components colored by objects of $\mathcal{C},$ see \cite[Theorem 2.5]{Turaev:book}.

We will need to express the invariant of some open Hopf links, where the open Hopf link colored by $V,W$ is the following framed oriented tangle:

\begin{center}
	\includegraphics[width=2cm]{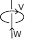}
\end{center}

For $W$ a projective module and $V\in \mathcal{C},$ let $\Phi_{V,W}$ be the element of $\mathrm{End}(W)$ corresponding to the Hopf link colored by $V,W$ opened at the edge colored by $W.$ If $W$ is simple, let also $\langle \Phi_{V,W} \rangle \in \C$ be defined by $\Phi_{V,W}=\langle \Phi_{V,W} \rangle  id_W.$

\begin{lemma}\label{lemma:Hopf}
	We have for each $\alpha,\beta \in \overset{..}{\C}:$
	$$\Phi_{S_1,V_{\alpha}}=(q^{\alpha}+q^{-\alpha})Id_{V_{\alpha}}, \ \Phi_{V_{\alpha},V_{\beta}}= \frac{q^{\alpha\beta}}{d(\beta)}Id_{V_{\beta}},$$
where $d(\beta)=\frac{\lbrace \beta \rbrace}{\lbrace N \beta \rbrace}.$	
	and for each $n\in \lbrace 0,\ldots N-2\rbrace:$
	$$\Phi_{S_1,P_n}=\left(q^{n+1}+q^{-n-1}\right)Id_{P_n} + (q-q^{-1})^2 h_n$$
	for some scalar $\lambda_{n}\in \C.$
\end{lemma}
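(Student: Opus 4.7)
The plan is to compute each $\Phi_{V,W}$ directly as the partial quantum trace over the encircling strand $V$ of the double braiding $c_{W,V}\circ c_{V,W}$ on $V\otimes W$, using the explicit $R$-matrix (\ref{eq:Rmatrix}) and twist (\ref{eq:twist}) formulas. The sums involved are always finite because $E,F$ act nilpotently on any weight module. Schur's lemma pins down the scalar form of the answer on simple targets, and Lemma \ref{lemma:endPn} does the same for $P_n$.

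For part (1), I work in the weight basis $\lbrace e_0,e_1 \rbrace$ of $S_1$ (with $H$-weights $+1,-1$). Only the terms $n=0,1$ in (\ref{eq:Rmatrix}) contribute to $c_{V_\alpha,S_1}\circ c_{S_1,V_\alpha}$ on $S_1\otimes V_\alpha$. Contracting with $coev_{S_1}\otimes \mathrm{Id}$ in and $\widetilde{ev}_{S_1}\otimes \mathrm{Id}$ out produces a scalar endomorphism of $V_\alpha$; the factor $K^{1-N}$ built into $\widetilde{ev}$ converts the $q$-weights of $e_0,e_1$ into the symmetric combination $q^{\alpha}+q^{-\alpha}$.

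For part (2), the efficient route is to exploit the decomposition $V_\alpha\otimes V_\beta\simeq\bigoplus_{k\in H_N}V_{\alpha+\beta+k}$ of Lemma \ref{lemma:tensorprod2}. By the ribbon identity $c_{W,V}\circ c_{V,W}=(\theta_V^{-1}\otimes\theta_W^{-1})\circ\theta_{V\otimes W}$, the double braiding acts on each simple summand $V_{\alpha+\beta+k}$ as a scalar $t(\alpha+\beta+k)\,t(\alpha)^{-1}t(\beta)^{-1}$, where $t(\gamma)$ is the twist eigenvalue on $V_\gamma$ obtained by applying (\ref{eq:twist}) to a highest weight vector (only the $n=0$ term survives). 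Taking the partial quantum trace over $V_\alpha$ then reduces the problem to a single sum indexed by $k\in H_N$; after routine manipulation the Gaussian-in-$\gamma$ part telescopes to $q^{\alpha\beta}$ and the residual geometric sum collapses, via a standard $q$-identity, to $d(\beta)^{-1}=\lbrace N\beta\rbrace/\lbrace\beta\rbrace$.

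Finally for part (3), Lemma \ref{lemma:endPn} tells us that $\Phi_{S_1,P_n}=\lambda\,\mathrm{Id}_{P_n}+\mu\,h_n$ for unique $\lambda,\mu\in\C$, so only two numbers remain to be computed. The coefficient $\lambda$ is pinned down by passing to the simple sub-quotient $S_n$ of $P_n$, where the computation reduces to the standard semisimple Hopf-link evaluation, giving $\lambda=q^{n+1}+q^{-n-1}$. To find $\mu$, I apply the double-braiding-plus-trace procedure directly to the weight vector $y_0\in P_n$ (whose image under $h_n$ is $x_{N-1-n}$) and read off the coefficient of $x_{N-1-n}$ in $\Phi_{S_1,P_n}(y_0)$ from the $R$-matrix. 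The main obstacle is the bookkeeping in this last step: the relation $Ey_0=x_{N-2-n}$ forces several terms of (\ref{eq:Rmatrix}) to contribute non-diagonally, and one must verify that these contributions assemble precisely into the factor $(q-q^{-1})^2$ (the impossibility of any other indecomposable contribution being guaranteed \emph{a posteriori} by Lemma \ref{lemma:endPn}).
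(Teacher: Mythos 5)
The argument for part (2) has a genuine gap. You propose to exploit the decomposition $V_\alpha\otimes V_\beta\simeq\bigoplus_{k\in H_N}V_{\alpha+\beta+k}$, observe that the double braiding acts on each summand by the scalar $t(\alpha+\beta+k)\,t(\alpha)^{-1}t(\beta)^{-1}$, and then take the partial quantum trace over $V_\alpha$. The problem is that the (ordinary, pivotal) partial trace of the projector $p_k$ onto a summand is not determined by the isomorphism type of the summands alone. Writing $\operatorname{ptr}_{V_\alpha}(p_k)=c_k\,Id_{V_\beta}$, the usual way to pin down $c_k$ is via quantum dimensions, $c_k\cdot\operatorname{qdim}(V_\beta)=\operatorname{qdim}(V_{\alpha+\beta+k})$; but here both quantum dimensions vanish, so the $c_k$ are a priori $0/0$. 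This vanishing is precisely the reason CGP introduce modified traces, and the modified dimension $d(\alpha)$ is in turn defined (Remark \ref{rmk:ambidex}) using the very Hopf link formula $\Phi_{V_\alpha,V_\beta}$ you are trying to prove, so the route is circular unless you supply an independent computation of the $c_k$. There is a secondary issue as well: the decomposition you invoke only holds when $\alpha+\beta\notin\frac14\Z$; when $\alpha+\beta\in\frac14\Z$ Lemma \ref{lemma:tensorprod2} produces $P_{2n}$ summands, which your argument does not treat.

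The paper sidesteps all of this for parts (1) and (2) with a single observation: evaluate $\Phi_{V,W}$ on a highest weight vector $w$ of the simple target $W$. Since $Ew=0$, only the $n=0$ terms of the $R$-matrix contribute in the relevant component, so $c_{V,W}\circ c_{W,V}$ acts on $w\otimes v$ as $q^{H\otimes H}$, and the $V$-partial trace of $q^{\lambda H}K^{1-N}$ is the character value $\chi_V(q^{\lambda+1-N})$. Plugging in $\lambda=\beta+N-1$ and $\chi_{V_\alpha}(X)=X^\alpha\frac{X^N-X^{-N}}{X-X^{-1}}$ gives the stated scalar directly, with no case split and no appeal to tensor decompositions. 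Your parts (1) and (3) are sound and match the paper (part (3) is carried out exactly as you describe, by applying the double braiding to $y_0$); your idea of reading $\lambda$ off the simple quotient $S_n$ is a valid shortcut, though once the $y_0$ computation is done it gives $\lambda$ for free anyway.
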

\begin{proof}
	The proof of those formulas is an adaptation of \cite[Lemma 6.6]{CGP15}. The map $\Phi_{V,W}$ can be computed as the  partial quantum trace of $c_{V,W}\circ c_{W,V}$ relative to $V.$ (recall that the quantum trace of $f$ is the trace of $K^{1-N}f$) When $W$ is simple, one must have $\Phi_{V,W}=\lambda Id_W$ so it is sufficient to compute $\Phi_{V,W}$ on a highest weight vector $w$ of $W.$ However, since $Ew=0,$ one has that $c_{V,W}\circ c_{W,V}$ acts on any vector $w\otimes v$ where $v\in V$ as $q^{H\otimes H}.$ It follows that $\lambda=\chi_V(q^{\lambda+1-N}),$ where $\lambda$ is the highest weight of $W,$ and $\chi_V(X)$ is the character of $V,$ defined in Definition \ref{def:character}.
	
	For $V=S_1$ and $W=V_{\alpha},$ since $\alpha+N-1$ is the highest weight of $V_{\alpha}$ and $\chi_{S_1}(X)=X+X^{-1},$ one gets $\Phi_{S_1,V_{\alpha}}= (q^{\alpha}+q^{-\alpha})Id_{V_{\alpha}}.$
	
	For $V=V_{\alpha}$ and $W=V_{\beta},$ since $\beta+N-1$ is the highest weight of $V_{\beta}$ and ${\chi_{V_{\alpha}}(X)=X^{\alpha}\left(\frac{X^N-X^{-N}}{X-X^{-1}}\right),}$ one gets
	$$\Phi_{V_{\alpha},V_{\beta}}=q^{\alpha\beta}\frac{\lbrace N\beta\rbrace}{\lbrace \beta \rbrace}Id_{V_{\beta}}=\frac{q^{\alpha\beta}}{d(\beta)}Id_{V_{\beta}},$$

	Finally, we compute $\Phi_{S_1,P_n}.$ Recall that $S_1$ has basis $e_0,e_1,$ for the action given in Definition \ref{def:modules}. Let $f=c_{S_1,P_n}\circ c_{P_n,S_1}.$ 
	Since $\Phi_{S_1,P_n}$ is the partial quantum trace of $f$ relative to $S_1,$ if for $v\in P_1$ we have $f(v\otimes e_0)=v_1\otimes e_0 +v_2 \otimes e_1$ and $f(v\otimes e_1)=v_3\otimes e_0 +v_4\otimes e_1,$ then $\Phi_{S_1,P_i}(v)=qv_1+q^{-1}v_4.$ 
	
	Moreover, the coefficient of $Id_{P_n}$ (resp. $h_n$) in $\Phi_{S_1,P_i}$ is the same as the coefficient of $y_0$ (resp. $x_{N-1-n}$) in $\Phi_{S_1,P_i}(y_0).$ Let us compute $\Phi_{S_1,P_i}(y_0).$
	We have
	\begin{multline*}f(y_0\otimes e_0)=c_{S_1,P_n}\circ c_{P_n,S_1}(y_0\otimes e_0)=c_{S_1,P_n}(q^{n/2}e_0\otimes y_0 +q^{-(n+2)/2}(q-q^{-1})e_1\otimes x_{N-2-n})
	\\=\left(q^n y_0 + q^{-1}(q-q^{-1})^2 x_{N-1-n}\right)\otimes e_0 + \ldots \otimes e_1
	\end{multline*}
	and
	$$f(y_0\otimes e_1)=c_{S_1,P_n}\circ c_{P_n,S_1}(y_0 \otimes e_1)=c_{S_1,P_n}(q^{-n/2}e_1\otimes y_0)=q^{-n}y_0\otimes e_1 + \ldots \otimes e_0.$$
	Therefore, we have $\Phi_{S_1,P_i}(y_0)=(q^{n+1}+q^{-n-1})y_0 +(q-q^{-1})^2x_{N-1-n},$ which concludes the proof.
\end{proof}

We can formally extend the invariant of framed oriented tangles colored by elements of $\mathcal{C}$ to an invariant of framed oriented tangles colored by formal linear combination of elements in $\mathcal{C}.$ The invariant is then obtained by extending multilinearly. We define the sequence of Chebychev polynomials $T_n(z) \in \Z[z]$ by $T_0(z)=2, T_1(z)=z, T_{n+1}(z)=zT_n(z)-T_{n-1}(z).$ We recall that $T_n(z)$ satisfies the identity $T_n(X+X^{-1})=X^n+X^{-n},$ as can be shown by an easy induction.

\begin{lemma}\label{lemma:HopfChebychev}
	We have for each $\alpha \in \overset{..}{\C}:$
	$$\Phi_{T_N(S_1),V_{\alpha}}=\left(e^{4i\pi\alpha}+e^{-4i\pi\alpha}\right)Id_{V_{\alpha}}$$
	and for each $i\in \lbrace 0,\ldots N-2\rbrace:$
	$$\Phi_{T_N(S_1),P_i}=2 Id_{P_i}.$$
	
\end{lemma}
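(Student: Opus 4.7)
The plan is to exploit functoriality of the open Hopf link invariant in its left color. Cabling the left strand of the open Hopf link by two components colored by $V_1, V_2$ and then fusing gives the identity $\Phi_{V_1\otimes V_2,W}=\Phi_{V_1,W}\circ \Phi_{V_2,W}$, while replacing the left color by a direct sum or a formal linear combination yields $\Phi_{aV_1+bV_2,W}=a\Phi_{V_1,W}+b\Phi_{V_2,W}$. Hence the map $V\mapsto \Phi_{V,W}$ extends to a ring homomorphism from $\Z[\mathcal{C}]$ (or its complexification) to $\mathrm{End}(W)$, and consequently for any polynomial $p\in\Z[z]$ we have $\Phi_{p(S_1),W}=p(\Phi_{S_1,W})$.

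For $W=V_\alpha$, Lemma \ref{lemma:Hopf} gives $\Phi_{S_1,V_\alpha}=(q^\alpha+q^{-\alpha})Id_{V_\alpha}$, which is scalar, so
\[
\Phi_{T_N(S_1),V_\alpha}=T_N(q^\alpha+q^{-\alpha})Id_{V_\alpha}=(q^{N\alpha}+q^{-N\alpha})Id_{V_\alpha}
\]
by the defining identity $T_N(X+X^{-1})=X^N+X^{-N}$. Since $q=e^{4i\pi/N}$, one has $q^{N\alpha}=e^{4i\pi\alpha}$, which gives the stated formula.

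For $W=P_n$, Lemma \ref{lemma:Hopf} gives $\Phi_{S_1,P_n}=aId_{P_n}+bh_n$ with $a=q^{n+1}+q^{-n-1}$ and $b=(q-q^{-1})^2$. Since $h_n^2=0$ by Lemma \ref{lemma:endPn}, the element $aId_{P_n}+bh_n$ satisfies $(X-aId_{P_n})^2=0$, and a formal Taylor expansion of any polynomial $p$ around $a$ yields $p(aId_{P_n}+bh_n)=p(a)Id_{P_n}+p'(a)bh_n$. Applying this with $p=T_N$, it remains to check that $T_N(a)=2$ and $T_N'(a)=0$.

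The main computational step, and the only place one must be a little careful, is evaluating $T_N'(a)$. Writing $a=x+x^{-1}$ with $x=q^{n+1}$, differentiating $T_N(x+x^{-1})=x^N+x^{-N}$ in $x$ gives
\[
T_N'(x+x^{-1})(1-x^{-2})=N(x^{N-1}-x^{-N-1}).
\]
Since $x^N=q^{N(n+1)}=e^{4i\pi(n+1)}=1$, the right-hand side vanishes, and since $n\in\{0,\ldots,N-2\}$ with $N$ odd and coprime to $4$ one checks that $x^2\neq 1$, so $T_N'(a)=0$. Likewise $T_N(a)=x^N+x^{-N}=2$. Combining, $\Phi_{T_N(S_1),P_n}=2Id_{P_n}$, completing the proof. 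The only potential obstruction, the non-vanishing of $1-x^{-2}$, is handled by the parity hypothesis on $N$ and the range of $n$.
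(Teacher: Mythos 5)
Your proof is correct and follows essentially the same route as the paper: both use the observation that $V\mapsto\Phi_{V,W}$ behaves multiplicatively so that $\Phi_{T_N(S_1),W}=T_N(\Phi_{S_1,W})$, then the first-order Taylor expansion $p(\lambda\,\mathrm{Id}+bh_n)=p(\lambda)\mathrm{Id}+p'(\lambda)bh_n$ for $h_n$ nilpotent of order $2$, and finally the evaluations $T_N(q^{n+1}+q^{-n-1})=2$ and $T_N'(q^{n+1}+q^{-n-1})=0$. The only cosmetic difference is that you obtain $T_N'(a)=0$ by implicit differentiation and an explicit check that $1-x^{-2}\neq 0$ for $x=q^{n+1}$, whereas the paper cites the closed form $T_N'(X+X^{-1})=N\left(\frac{X^N-X^{-N}}{X-X^{-1}}\right)$ — your nonvanishing check is exactly what makes that denominator harmless, so the two are the same computation.
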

\begin{proof}
The first claim follows by Lemma \ref{lemma:Hopf} and the fact that $T_N(X+X^{-1})=X^N+X^{-N},$ for any invertible element $X$ in a ring.

For the second claim, first note that if $\lambda,t$ are elements in some ring such that $t^2=0,$ then for any polynomial $R\in \Z[X]$ we have that $R(\lambda +t)=R(\lambda)+R'(\lambda)t.$

Applying this to $\Phi_{T_N(S_1),P_i},$ we have that 
	$$\Phi_{T_N(S_1),P_i}=T_N(\Phi_{S_1,P_i})=T_N(q^{i+1}+q^{-i-1})Id_{P_i} +T_N'(q^{i+1}+q^{-i-1})(q-q^{-1})^2 h_i=2Id_{P_i}.$$
	Here we have used that $T_N(q^{i+1}+q^{-i-1})=q^{N(i+1)}+q^{-N(i+1)}=2$ and that ${T_N'(q^{i+1}+q^{-i-1})=0.}$ The second equality comes from the identity $T_N'(X+X^{-1})=N\left(\frac{X^N-X^{-N}}{X-X^{-1}}\right),$ 	which can be deduced from the identity $T_N(X+X^{-1})=X^N+X^{-N}$ by differentiating.
\end{proof}

For $V\in \mathcal{C},$ and $f\in \mathrm{End}(V\otimes V),$ let 
$$t_L(f)=\widetilde{ev}_V \otimes Id_V)\circ (Id_{V^*}\otimes f)\circ (coev_V \otimes Id_V) \in \mathrm{End}(V)$$
$$t_R(f)=(Id_V\otimes ev_V)\circ (f\otimes Id_{V^*})\circ (Id_V\otimes \widetilde{ev}_V)\in \mathrm{End}(V).$$
\begin{definition}\label{def:ambidex}\cite{GPT}
	A simple object $V\in \mathcal{C}$ is ambidextrous if 
	$$\forall f\in \mathrm{End}(V\otimes V), \ t_L(f)=t_R(f).$$	
\end{definition}
\begin{lemma}\label{lemma:ambidex}
	$V_0$ is an ambidextrous object in $\mathcal{C}.$
\end{lemma}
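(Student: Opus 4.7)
My strategy is to fully describe $\mathrm{End}(V_0 \otimes V_0)$ using the decomposition of Lemma \ref{lemma:tensorprod2}, and then verify $t_L(f) = t_R(f)$ on a basis.

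First, by Lemma \ref{lemma:tensorprod2} we have $V_0 \otimes V_0 \simeq V_0 \oplus \bigoplus_{n=0}^{(N-3)/2} P_{2n}$. I would begin by showing there are no nonzero morphisms between distinct indecomposable summands. Inspecting Definition \ref{def:modules} and using $[N] = 0$, the vectors annihilated by $E$ in $V_0$ are spanned by $v_0$ (weight $N-1$), while those in $P_{2n}$ are spanned by $x_0$ and $x_{N-1-2n}$ (weights $2N-2-2n$ and $2n$ respectively). For $n \in \{0, \ldots, (N-3)/2\}$ these weights form a set of pairwise distinct values, none equal to $N-1$. Any morphism from $V_0$ into a summand must send $v_0$ to a primitive vector of weight $N-1$, of which none exist outside $V_0$; any morphism from $P_{2n}$ into a summand is determined by the image of the generator $y_0$, which together with the constraint $Ey_0 = x_{N-2-2n}$ forces the images of $y_0$ and $x_0$ to lie in incompatible weight spaces, hence to vanish. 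Combined with Schur's lemma for $V_0$ and Lemma \ref{lemma:endPn} for each $P_{2n}$, this yields
$$\mathrm{End}(V_0 \otimes V_0) = \C \cdot p_0 \oplus \bigoplus_{n=0}^{(N-3)/2} \bigl(\C \cdot p_n \oplus \C \cdot \widetilde h_{2n}\bigr),$$
where $p_0$ and $p_n$ are the projectors onto the $V_0$ and $P_{2n}$ summands and $\widetilde h_{2n}$ is the extension of $h_{2n}$ by zero.

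Next, by linearity of $t_L$ and $t_R$ it suffices to check equality on each basis element. For $Id_{V_0 \otimes V_0} = p_0 + \sum_n p_n$, both sides give $qtr(Id_{V_0}) \cdot Id_{V_0}$, which vanishes because the weights of $V_0$ are symmetric around $0$: the sum $qtr(Id_{V_0}) = \sum_{j=0}^{N-1} q^{(1-N)(N-1-2j)}$ is a geometric series in $q^{-2}$, a primitive $N$-th root of unity, hence equals $0$. For each nilpotent $\widetilde h_{2n}$, the relation $h_{2n}^2 = 0$ together with Lemma \ref{lemma:endPn} forces $t_L(\widetilde h_{2n})$ and $t_R(\widetilde h_{2n})$ to be scalar multiples of $Id_{V_0}$ determined by a single nonzero matrix entry, which is computed directly from the module structure of $P_{2n}$ in Definition \ref{def:modules}.

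The hardest step I expect is verifying $t_L(p_0) = t_R(p_0)$ and $t_L(p_n) = t_R(p_n)$ for the projectors, which requires explicit inclusion/projection pairs $V_0 \otimes V_0 \leftrightarrow V_0$ and $V_0 \otimes V_0 \leftrightarrow P_{2n}$ of Clebsch--Gordan type, together with a direct computation of both partial traces that tracks the action of $K^{1-N}$ appearing in the formulas for $\widetilde{ev}$ and $\widetilde{coev}$. A cleaner route I would try first is to exploit the self-duality $V_0 \simeq V_0^*$ arising from the symmetry of the weights of $V_0$ around $0$: this identification produces a natural involution on $\mathrm{End}(V_0 \otimes V_0)$ exchanging the roles of the two tensor factors, and one checks that it interchanges $t_L$ with $t_R$ while fixing each projector $p_U$ up to the self-duality isomorphism, reducing the problem to verifying compatibility between the self-duality isomorphism and the pivotal isomorphism $V_0 \to V_0^{**}$.
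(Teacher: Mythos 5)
Your computation of $\mathrm{End}(V_0 \otimes V_0)$ is correct and actually more careful than the paper (you explicitly justify the vanishing of cross-Homs between distinct indecomposable summands, which the paper leaves implicit). But from there you take a needlessly difficult route and do not finish it, so there is a genuine gap. The missing idea is \cite[Lemma 1]{GPT}, cited in the paper: in a ribbon category, $V$ is ambidextrous as soon as $c_{V,V}$ commutes with every element of $\mathrm{End}(V\otimes V)$. The ring you have just described,
$$\mathrm{End}(V_0\otimes V_0)\simeq \mathrm{End}(V_0)\ \oplus\ \bigoplus_{n=0}^{(N-3)/2}\mathrm{End}(P_{2n}),$$
is a direct sum of commutative rings ($\mathrm{End}(V_0)=\C$ by Schur, and each $\mathrm{End}(P_{2n})=\C\,Id\oplus\C\,h_{2n}$ with $h_{2n}^2=0$ by Lemma \ref{lemma:endPn}), hence commutative, so $c_{V_0,V_0}\in\mathrm{End}(V_0\otimes V_0)$ automatically commutes with everything and the lemma is immediate.

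Instead of making that observation, you set out to verify $t_L=t_R$ on a basis directly. The pieces you do carry out (the vanishing of $t_L(Id)=t_R(Id)=qtr(Id_{V_0})\,Id_{V_0}=0$) are fine, but you yourself flag the essential step --- checking $t_L(p_0)=t_R(p_0)$ and $t_L(p_n)=t_R(p_n)$ for the individual projectors --- as the hard part and only describe two possible strategies (explicit Clebsch--Gordan maps, or a self-duality involution) without executing either. The self-duality idea is plausible but requires verifying compatibility with the pivotal structure, which is not routine. As written, the proposal does not prove the statement; adding the one-line appeal to \cite[Lemma 1]{GPT} after your $\mathrm{End}$ computation would both complete and drastically shorten it.
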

\begin{proof}
	By \cite[Lemma 1]{GPT}, an object $V$ in a ribbon category is ambidextrous if $c_{V,V}$ commutes with any element of $\mathrm{End}(V\otimes V).$ However, by Lemma \ref{lemma:tensorprod2} and Lemma \ref{lemma:endPn}, 
	$$\mathrm{End}(V_0\otimes V_0)\simeq \mathrm{End}(V_0) \oplus \underset{n=0}{\overset{\frac{N-3}{2}}{\bigoplus}} \mathrm{End}(P_{2n})$$ is a commutative ring.
\end{proof}

Given an ambdextrous object $J$, \cite{GPT} construct a \textit{ambidextrous pair} in the following way. An ambidextrous pair $(A,d)$ will consist of $A$ a set of simple objects $V$ such that $\Phi_{J,V} \neq 0,$ and $d:A \longrightarrow \C$ is defined by the formula: 

$$d(V)=d_0\frac{\langle \Phi_{V,J}\rangle}{\langle \Phi_{J,V}\rangle},$$
where $d_0$ is any non zero complex number.

\begin{remark}\label{rmk:ambidex} From Lemma \ref{lemma:Hopf} and \ref{lemma:ambidex}, we see that $A=\lbrace V_{\alpha}, \alpha \in \overset{..}{\C} \rbrace$ and 
$$d(V_{\alpha})=d(\alpha)=\frac{\lbrace \alpha \rbrace}{\lbrace N\alpha \rbrace}$$
 is an ambidextrous pair.
 \end{remark}

The significance of ambidextrous pairs comes from the following theorem:

\begin{theorem}\label{thm:linkInv}\cite{GPT}
	Let $(A,d)$ be an ambidextrous pair. The following defines an invariant of framed oriented $\mathcal{C}$-colored links which have at least one component colored by an element in $A:$
	
	$$F(L)=d(RT_{\mathcal{C}}(L_e))$$
	where $e$ is an component of $T$ colored by $X\in A,$ $L_e$ is the tangle obtained from opening the link $L$ at $e,$ and $RT_{\mathcal{C}}$ is the Reshetikhin-Turaev functor associated to the ribbon category $\mathcal{C}.$
\end{theorem}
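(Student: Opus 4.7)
The plan is to show that $F(L)$ is well-defined, i.e.\ independent of the choice of cut edge $e$: once this is established, invariance under framed isotopy follows immediately from the corresponding property of the Reshetikhin-Turaev functor on tangles. Since $RT_{\mathcal{C}}(L_e) \in \mathrm{End}_{\mathcal{C}}(X)$ for a simple $X \in A$, this endomorphism is a scalar $\lambda_e \cdot \mathrm{Id}_X$, and what must be proved is that the product $d(X)\lambda_e$ does not depend on $e$.

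The first and easier step is to handle the case where $e, e'$ lie on the same component of $L$ (so colored by the same object $X$). By a planar isotopy one can slide the cut along the component, which amounts to conjugation of endomorphisms in a pivotal category; since $X$ is simple, the scalar representing the endomorphism is unaffected by such conjugation. Hence $\lambda_e = \lambda_{e'}$ and the claim is immediate.

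The core of the proof is the case where $e, e'$ lie on different components, with colors $V, V' \in A$. Using isotopy, push the two chosen edges into a common disk so that the remainder of the colored tangle defines a morphism $\Psi \in \mathrm{End}_{\mathcal{C}}(V \otimes V')$. In the notation preceding Definition \ref{def:ambidex}, one has $RT_{\mathcal{C}}(L_e) = t_R(\Psi)$ and $RT_{\mathcal{C}}(L_{e'}) = t_L(\Psi)$, so the desired equality takes the form
\[ d(V)\,\langle t_R(\Psi)\rangle \;=\; d(V')\,\langle t_L(\Psi)\rangle. \]
I would first treat the special case $V' = J$, with $J = V_0$ the ambidextrous object of Lemma \ref{lemma:ambidex}: here Definition \ref{def:ambidex} gives $t_L(\Psi) = t_R(\Psi)$ directly, and the identity reduces to $d(V)\langle \Phi_{J,V}\rangle = d(J)\langle \Phi_{V,J}\rangle$, which is precisely the content of the defining formula $d(V) = d_0\langle \Phi_{V,J}\rangle/\langle \Phi_{J,V}\rangle$ once we set $d_0 = d(J)$. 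The general case $V' \in A$ arbitrary is then obtained by inserting a $J$-colored unknot meridian around one strand, expanding both partial traces as Hopf link contractions using Lemma \ref{lemma:Hopf}, and verifying that the Hopf-link factors produced by the insertion cancel exactly against the defining ratio of $d$.

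The main obstacle is this bootstrap step from the single ambidextrous object $J$ to arbitrary $V, V' \in A$: ambidexterity as formulated in Definition \ref{def:ambidex} only concerns endomorphisms of $V \otimes V$ for one fixed $V$, while here we need to compare partial traces on mixed tensor products $V \otimes V'$. The normalization of $d$ on $A$ is tailored precisely to produce the correct compensating factors in this extension, which is exactly the modified-trace construction of \cite{GPT}. We may invoke their result directly, having verified in Lemma \ref{lemma:ambidex} that $J = V_0$ is ambidextrous and in Remark \ref{rmk:ambidex} that the induced $d$-function has the claimed closed form.
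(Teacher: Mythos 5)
The paper does not prove Theorem~\ref{thm:linkInv}: it is a citation of a result from \cite{GPT}, stated and then used without further argument. So there is no in-paper proof to compare yours against; what I can comment on is the accuracy of your reconstruction of the \cite{GPT} argument.

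Your high-level structure is right: well-definedness of $F$ reduces to independence of the chosen cut edge; same-component cuts are handled by cyclicity/conjugation plus simplicity; different-component cuts reduce to an identity of the form $d(V)\,\langle t_{V'}(\Psi)\rangle = d(V')\,\langle t_{V}(\Psi)\rangle$ for $\Psi \in \mathrm{End}(V\otimes V')$; and the normalization of $d$ is what makes this identity come out.

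There is, however, a concrete flaw in your ``special case $V'=J$'' step. You assert that for $\Psi\in\mathrm{End}(V\otimes J)$ the ambidexterity of $J$ (Definition~\ref{def:ambidex}) ``gives $t_L(\Psi)=t_R(\Psi)$ directly.'' It does not: Definition~\ref{def:ambidex} quantifies only over $f\in\mathrm{End}(J\otimes J)$, and if $V\neq J$ it says nothing about $\mathrm{End}(V\otimes J)$. Moreover, even granting this, you then claim the desired identity ``reduces to'' the Hopf link identity $d(V)\langle\Phi_{J,V}\rangle = d(J)\langle\Phi_{V,J}\rangle$, but that is only the particular case $\Psi = c_{J,V}\circ c_{V,J}$; for general $\Psi\in\mathrm{End}(V\otimes J)$ there is no such reduction without further argument. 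In the actual \cite{GPT} proof, the bridge from $J\otimes J$ to mixed products $V\otimes W$ is the whole content of their key lemma, and is obtained by ``routing through $J$'' in both partial traces and using the Hopf link coefficients to compare them; it is not a formal consequence of the definitions that can be dispatched in one line. You do acknowledge this obstacle in your final paragraph and defer to \cite{GPT} for it, which is legitimate given that the paper itself simply cites the theorem, but the intermediate ``special case'' claim as written is incorrect and should be removed or replaced by a direct pointer to the relevant lemma in \cite{GPT}.
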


\section{Invariants of $3$-manifolds}
\label{sec:3mfd-inv}
To define an invariant of $3$-manifolds, we need extra assumptions on the ribbon category $\mathcal{C}.$ In Definition 4.2 of \cite{CGP14}, the authors introduce the concept of relative modular categories, which, as we will explain further down, allow the construction of an invariant of triples $(M,\omega,L)$ where $M$ is a $3$-manifold, $\omega$ is a cohomology class on $M$ and $L$ is a $\mathcal{C}$-colored framed oriented link, that satisfy some conditions.

We refer to \cite{CGP14} for the definition of relative modular categories. Our claim is that the category $\mathcal{C}$ satisfy the hypothesis of \cite[Definition 4.2]{CGP14}:  
\begin{theorem}
The ribbon category $\mathcal{C}$ is $\C/\Z$-modular relative to $\widetilde{X}=\frac{1}{4}\Z/\Z$ with modified trace $d$ and periodicity group $\Z$ realized by the modules $\sigma^k:=\varepsilon^{4k}$ with $k\in \Z.$
\end{theorem}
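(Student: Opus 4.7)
The plan is to verify each condition of \cite[Definition 4.2]{CGP14} in turn, using the results of Sections \ref{sec:modules} and \ref{sec:trivgraphs} as the main building blocks. The whole argument is a near-verbatim transposition of the analogous verification carried out in \cite[Section 6]{CGP15} for even-order roots of unity; the only structural differences are the convention $q = e^{4i\pi/N}$ and the finer grading by $\C/\Z$ (as opposed to $\C/2\Z$).

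For generic semisimplicity, I would observe that if $\overline{\alpha} \in \C/\Z \setminus \widetilde{X}$, then any $\beta \equiv \alpha \pmod{\Z}$ satisfies $\beta \notin \frac{1}{4}\Z$, hence $\beta \in \overset{..}{\C}$ and $V_\beta$ is simple projective by Theorem \ref{thm:classifModules}. The remaining indecomposable modules $S_n \otimes \varepsilon^l$ and $P_n \otimes \varepsilon^l$ all have grading $\overline{lN/4} \in \frac{1}{4}\Z/\Z = \widetilde{X}$, so they contribute nothing outside $\widetilde{X}$. Thus $\mathcal{C}_{\overline{\alpha}}$ is semisimple with set of simple objects $\{V_\beta : \beta \equiv \alpha \pmod{\Z}\}$. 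The modified trace on the ideal of projectives is furnished by the ambidextrous pair $(A,d)$ of Remark \ref{rmk:ambidex} via the standard extension procedure of \cite{GPT}.

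The periodicity is handled by the one-dimensional modules $\sigma^k = \varepsilon^{4k}$. These are invertible with $(\sigma^k)^{-1} = \sigma^{-k}$ and $\sigma^k \otimes \sigma^l \cong \sigma^{k+l}$, which follows directly from Definition \ref{def:modules}(4) (both sides are one-dimensional, $E,F$ act as zero, and the $H$-weights add). Since $kN \in \Z$, each $\sigma^k$ lies in $\mathcal{C}_{\overline{0}}$; the distinct $H$-weights $kN$ for different $k$ guarantee that $\sigma^k$ is non-isomorphic to the trivial module $\varepsilon^0$ whenever $k \neq 0$, realizing $\Z$ freely as a group of invertible objects.

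The only step requiring a genuine calculation is the non-degeneracy (relative $S$-matrix) condition, and this is where I expect the main obstacle. Here Lemma \ref{lemma:Hopf} provides the key input: $\langle \Phi_{V_\alpha, V_\beta}\rangle = q^{\alpha\beta}/d(\beta)$ for generic $\alpha, \beta \in \overset{..}{\C}$. The non-degeneracy axiom reduces to checking that a stabilization sum of the form
$$\sum_{k} d(\alpha + k)\, q^{\beta(\alpha + k)}$$
(with $k$ ranging over a finite index set controlled by the $\sigma$-periodicity) is a non-zero multiple of a Kronecker delta in the class of $\beta$ modulo the periodicity lattice. This is the heart of the verification; however, the computation mirrors the one in \cite[Section 6]{CGP15} step for step once the change of conventions is accounted for, and no new phenomenon arises in the odd-order case.
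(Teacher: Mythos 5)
Your proposal follows the same high-level route as the paper: verify the axioms of \cite[Definition 4.2]{CGP14} one by one, leaning on Theorem \ref{thm:classifModules}, Lemma \ref{lemma:Hopf}, and Remark \ref{rmk:ambidex}. Your treatment of generic semisimplicity (axiom (6)), the modified trace via the ambidextrous pair $(A,d)$ (axiom (5)), and the free realization of $\Z$ by $\sigma^k = \varepsilon^{4k}$ (axiom (2)) is correct and matches the paper. The easier formal axioms about the grading and the smallness/symmetry of $\widetilde{X}=\frac{1}{4}\Z/\Z$ are also handled routinely on both sides (the paper notes for axiom (3) that the double braiding of $\sigma$ with $V \in \mathcal{C}_{\overline{\alpha}}$ is multiplication by $q^{N\alpha}$, which you do not state but which is a one-line check from the $R$-matrix).

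The gap is in your treatment of the last two axioms, which you collapse into a single ``non-degeneracy / relative $S$-matrix'' step and describe as a stabilization sum $\sum_k d(\alpha+k)\, q^{\beta(\alpha+k)}$ that ``mirrors'' the even-order case. This misses what the paper actually has to verify. Axiom (7) of \cite{CGP14} requires $\Delta_\pm$ (the evaluations of a $\mp 1$-framed unknot colored by the Kirby color) to be nonzero, and the sum involved is \emph{not} of the form you wrote: it picks up the twist $\theta_{V_{\alpha+k}}$ and reduces to the quadratic Gauss sum $G_N = \sum_{k=0}^{N-1} q^{-k^2/2}$, whose nonvanishing at the odd root of unity $q = e^{4i\pi/N}$ (with value $\sqrt{N}$ or $-i\sqrt{N}$ depending on $N \bmod 4$) is the one genuinely new computation in the odd-order setting and cannot be read off directly from Lemma \ref{lemma:Hopf}. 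Axiom (8), the relative modularity, is the part that \emph{does} reduce directly to the open Hopf link formula $\Phi_{V_\alpha,V_\beta} = q^{\alpha\beta} d(\beta)^{-1}\, Id_{V_\beta}$, and is quick once that formula is in hand. So your plan would need to be completed by: (i) separating these two axioms, and (ii) carrying out the $\Delta_\pm$ Gauss sum explicitly rather than appealing to the even-order computation, since the convention $q = e^{4i\pi/N}$ and the parity of $N$ change the Gauss sum.
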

\begin{proof}
	We have to verify the axioms of Definition 4.2 of \cite{CGP14}. Axiom (1) simply says that $\mathcal{C}$ is $\C/\Z$-graded.
	We have seen that $\sigma\in \mathcal{C}_{\overline{0}}$ and since tensoring a module by $\sigma^k$ shifts all weights by $kN,$ the $\sigma^k,$ $k\in \Z$ give a free realization of $\Z$ in $\mathcal{C}_{\overline{0}}.$ 
	To check axiom (3), notice that since $E$ and $F$ vanish on $\sigma,$ one has for any $\overline{\alpha} \in \C/\Z$ and any $V\in \mathcal{C}_{\overline{\alpha}}$ that 
	$$c_{\sigma,V} \circ c_{V,\sigma}=q^{H\otimes H}=q^{N\alpha}Id_{V\otimes \sigma},$$
	since weight vectors in $V$ have weights congruent to $\alpha \mod \Z,$ and $q^{N\alpha}$ depends only on $\alpha \mod \Z.$ Finally, $\sigma$ has quantum dimension $1.$
	Axiom (4) is clear since $\widetilde{X}=\frac{1}{4}\Z/\Z$ is symmetrical and $\C/\Z$ is not covered by a finite number of its translates.
	Axiom (5) has been verified in Remark \ref{rmk:ambidex}.
	Axiom (6) is a consequence of Theorem \ref{thm:classifModules}: since $V_{\alpha}\otimes \sigma=V_{\alpha+N},$ for $\overline{\alpha}\in \C/\Z \setminus \widetilde{X},$ the simple objects of $\mathcal{C}_{\overline{\alpha}}$ are the $\Z$-orbits of $V_{\alpha},V_{\alpha+1}, \ldots, V_{\alpha+N-1}.$
	Axiom (7) follows from a similar computation to that of \cite[End of Section 2.2]{CGP14}, which gives the formula
	$\Delta_-=q^{3/2}G_N,$ where
	$$G_N=\underset{k=0}{\overset{N-1}{\sum}} q^{-\frac{1}{2}k^2}=\begin{cases}
		\sqrt{N} \ \textrm{if} \ N=1 \ \mathrm{mod} \ 4
		\\ -i\sqrt{N} \ \textrm{if} \ N=3 \ \mathrm{mod} \ 4
		\end{cases}.$$
	Finally, axiom (8) follows from the second equation of Lemma \ref{lemma:Hopf}. 
\end{proof}

We will now explain how the machinery of \cite{CGP14} constructs a $3$-manifold invariant from our category $\mathcal{C}.$ For $M$ a $3$-manifold and $T$ a framed trivalent graph in $M,$ let us a call a cohomology class $\omega \in H^1(M\setminus L,\C/\Z)$ \textit{non-integral} if $\omega \notin H^1(M\setminus L,\frac{1}{4}\Z/\Z).$ We will call a triple $(M,\omega,L)$ where $M$ is an oriented compact closed $3$-manifold, $\omega \in H^1(M\setminus L,\C/\Z)$ and $L$ is a framed oriented $\mathcal{C}$-colored link, a \textit{compatible triple}, if $\omega$ is non-integral and $c_e \in \mathcal{C}_{\omega(\mu_e)}$ for any component $L_e$ of $L,$ where $\mu_e$ is an oriented meridian of the component $L_e$ and $c_e$ is the color of $L_e.$

A surgery presentation $L_M$ of $M$ is then computable if for any component $L_{M,i}$ of $L_M,$ one has that $\omega(\mu_i)\notin \frac{1}{4}\Z/\Z,$ where $\mu_i$ is a meridian of $L_{M,i}.$ The results of \cite{CGP14} show that a computable surgery presentation exists for any compatible triple.

It follows from the general construction in \cite{CGP14} that we can define an invariant of $\mathcal{C}$-links as follows:

\begin{theorem}\label{thm:3-mfdInv}\cite[Theorem 4.7]{CGP14}
	Let $(M,\omega,L)$ be a compatible triple.  Let $L_M=L_{M,1}\cup \ldots \cup L_{M,n}$ be a computable surgery presentation for $(M,L,\omega),$ and set
	$$Z_N(M,\omega,L)=\frac{F_N(\underset{i=1}{\overset{n}{\bigcup}}(L_{M,i},\Omega_{\omega(m_i)})\cup L)}{\Delta_+^p\Delta_-^q},$$
	where $(p,q)$ is the signature of the linking matrix of $L_M,$ and for $\alpha \in \C\setminus \frac{1}{4}\Z,$ the Kirby color $\Omega_{\alpha}$ is
	$$\Omega_{\alpha}=\underset{i=0}{\overset{N-1}{\sum}}d(\alpha+i)V_{\alpha+i}.$$

	Then $Z_N(M,L,\omega)$ does not depend on the computable surgery presentation $L_M$ or on the choice of lifts of $\omega(m_i)$ to $\C,$ and thus defines an invariant of the triple $(M,\omega,L).$
\end{theorem}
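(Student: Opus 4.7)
The plan is to deduce the theorem from the general machinery of \cite{CGP14}: since the relative modularity of $\mathcal{C}$ was just established in the preceding theorem, the statement is an instance of \cite[Theorem 4.7]{CGP14}. I would organize the verification into the two independence claims, and indicate where each previously proved structural input intervenes.

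For the independence of the lifts $\omega(m_i)\in\C$ of the cohomology class, I would show that replacing a chosen lift $\alpha$ by $\alpha+1$ does not change the right-hand side. Reindexing the sum defining $\Omega_\alpha$, the net difference amounts to replacing $d(\alpha)V_\alpha$ by $d(\alpha+N)V_{\alpha+N}$. The classification Theorem \ref{thm:classifModules} together with the $\sigma$-action yields $V_{\alpha+N}\simeq V_\alpha\otimes\sigma$; the explicit formula in Remark \ref{rmk:ambidex} gives $d(\alpha+N)=d(\alpha)$; and tensoring a colour by $\sigma$ leaves the Reshetikhin-Turaev invariant unchanged because $\sigma$ has quantum dimension one and its double braiding with any object of the same graded component is trivial, as checked in the preceding theorem. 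Combining these three facts yields the desired invariance under lifts.

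For the independence of the computable surgery presentation, I would invoke Kirby's theorem: two such presentations are related by a sequence of handle slides and $\pm 1$-framed stabilizations supported away from $L$. Stabilization by a $\pm 1$-framed unknot coloured by a Kirby colour $\Omega_\alpha$ multiplies $F_N$ by $\Delta_\pm$ while shifting the signature $(p,q)$ by $(\pm 1,0)$ or $(0,\pm 1)$, so the denominator $\Delta_+^p\Delta_-^q$ exactly compensates the change. For handle slides, one must prove that sliding a surgery component coloured by $\Omega_\alpha$ over another component does not alter $F_N$; this is the absorbing property of Kirby colours and follows from the fusion decomposition $V_\alpha\otimes V_\beta\simeq \bigoplus_{k\in H_N} V_{\alpha+\beta+k}$ of Lemma \ref{lemma:tensorprod2}, after fusing the two parallel strands and comparing the two configurations term by term using the sliding compatibility of the modified trace $d$.

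The main obstacle is the handle slide invariance, since this is where the non-degeneracy of the Hopf pairing on $\mathcal{C}$ is genuinely used. This ingredient was packaged into the preceding theorem through axioms (5), (7) and (8) of \cite[Definition 4.2]{CGP14}; concretely, the explicit formula $\Phi_{V_\alpha,V_\beta}=\frac{q^{\alpha\beta}}{d(\beta)} Id_{V_\beta}$ of Lemma \ref{lemma:Hopf} and the Gauss-sum computation of $\Delta_\pm$ are precisely what \cite{CGP14} use to derive the sliding identity for $\Omega_\alpha$. Once these facts are in place, the conclusion follows by invoking \cite[Theorem 4.7]{CGP14} verbatim.
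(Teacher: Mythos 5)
Your proposal matches the paper's approach: both simply invoke \cite[Theorem 4.7]{CGP14} applied to the category $\mathcal{C}$, whose relative modularity is the preceding theorem, and the paper in fact gives no further proof of this statement. One imprecision in your elaboration worth flagging: the double braiding of $\sigma$ with an object $V\in\mathcal{C}_{\overline{\alpha}}$ is not trivial but equals $q^{N\alpha}\,\mathrm{Id}_{V\otimes\sigma}$ (as verified in the proof of the preceding theorem via axiom (3)), so invariance under changing lifts of $\omega(m_i)$ is not a free consequence of ``$\sigma$ has quantum dimension one''; the cancellation of these bicharacter phases across a surgery diagram uses the cocycle constraint on $\omega$, and this is part of what \cite{CGP14} establish. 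Since you ultimately defer to their Theorem 4.7 verbatim, this does not affect the correctness of your conclusion.
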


\begin{remark}\label{rmk:compatible}
	Our definition of compatible triples is more restrictive than that of \cite{CGP14}, as they allow some triples with integral cohomology class, for which they can still define the invariant. We will consider only $3$-manifolds equiped with non-integral cohomology classes in the remainder of this note, hence we will stick to our more restrictive condition.
\end{remark}

\section{Maps on the skein module $S_{\zeta}(M)$}
\label{sec:maps}
We recall that we set $\zeta=-q^{1/2}=-e^{\frac{2i\pi}{N}},$ which is a primitive $2N$-th root of unity.
\begin{proposition}\label{prop:Skeinmodulemap}
	Let $M$ be a compact oriented $3$-manifold. Let $\omega\in H^1(M,\C/\Z)$ non integral. For $L$ a colored framed link with all components colored by $S_1,$ we set
	$$f_{\omega}(L)= (-1)^{\#L} Z_N(M,\omega,L)$$
	where $\#L$ is the number of components of $L.$ Then the map $f_{\omega}$  induces a map $S_{\zeta}(M)\longrightarrow \C.$
	
	In the above, since $S_1\in \mathcal{C}_{\overline{0}},$ the triple $(M,L,\omega)$ is compatible for any link $L,$ where the cohomology class $\omega\in H^1(M,\C/\Z)$ is intepreted as a cohomology class in $H^1(M\setminus L,\C/\Z)$ by restriction.
\end{proposition}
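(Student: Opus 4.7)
The plan is to verify that $f_\omega$ respects the two defining relations of the Kauffman bracket skein module at $A = \zeta$: the crossing relation $L_\times = \zeta L_0 + \zeta^{-1} L_\infty$ and the loop relation $L \sqcup \bigcirc = (-\zeta^2 - \zeta^{-2}) L$. Since the normalization factor $\Delta_+^p \Delta_-^q$ in Theorem \ref{thm:3-mfdInv} depends only on the surgery presentation $L_M$ and not on $L$, both relations for $f_\omega$ reduce to the corresponding identities for the map $L \mapsto (-1)^{\#L} F_N(L_M \cup L)$. By the functoriality of the underlying ribbon functor, these are local and reduce to identities in $\mathcal{C}$ involving only $S_1$ and $S_1 \otimes S_1$.

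For the loop relation, I first compute $\mathrm{qdim}(S_1) = \mathrm{Tr}(K^{1-N}|_{S_1}) = q + q^{-1}$ using $q^N = 1$. Adding a split $S_1$-coloured unknot to $L$ multiplies $F_N$ (and hence $Z_N$) by $q+q^{-1}$ and flips the sign $(-1)^{\#L}$. The combined effect on $f_\omega$ is multiplication by $-(q+q^{-1}) = -\zeta^2 - \zeta^{-2}$, establishing relation (ii).

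For the crossing relation, the key computation uses equation (\ref{eq:Rmatrix}) to evaluate the braiding $c_{S_1,S_1}$ on $S_1 \otimes S_1$. Since $E$ and $F$ act as nilpotents of index $2$ on $S_1$, the infinite sum collapses to two terms. An explicit matrix calculation on the basis $e_i \otimes e_j$, combined with the decomposition $S_1 \otimes S_1 \simeq S_0 \oplus S_2$, yields the Hecke-type identity
\[ c_{S_1,S_1} = q^{1/2}\, \mathrm{id}_{S_1 \otimes S_1} - q^{-1/2}\, U, \]
where $U \in \mathrm{End}(S_1 \otimes S_1)$ is the cap-cup endomorphism (equal to $(q+q^{-1})$ times the projector onto the trivial summand, as follows from $U^2 = \mathrm{qdim}(S_1)\,U$). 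Applied locally at a crossing of a link diagram, this translates into an identity expressing $F_N(L_\times)$ as a linear combination of $F_N$ on the two smoothings.

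The final step is to reconcile this ribbon-categorical identity with the Kauffman skein relation at $A = \zeta$. Using $q^{1/2} = -\zeta$ and $-q^{-1/2} = \zeta^{-1}$, the identity above reads $c_{S_1,S_1} = -\zeta \cdot \mathrm{id} + \zeta^{-1} \cdot U$. The $(-1)^{\#L}$ prefactor in $f_\omega$ then converts this into the Kauffman crossing relation $f_\omega(L_\times) = \zeta f_\omega(L_0) + \zeta^{-1} f_\omega(L_\infty)$, since the two smoothings at a crossing alter $\#L$ with opposite parities matching the sign discrepancy. The main obstacle is this final sign bookkeeping: carefully tracking how the $(-1)^{\#L}$ correction aligns the conventions of the ribbon category and the Kauffman bracket, together with the possible shifts of $\#L$ under the two smoothings at a crossing. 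This reflects the standard fact that the Reshetikhin--Turaev invariant associated to the two-dimensional module of $U_q^H sl_2$ reproduces the Kauffman bracket up to a sign depending on the number of components, which is encoded in the definition of $f_\omega$.
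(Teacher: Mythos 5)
Your outline correctly identifies the two skein relations to verify, and your computation of $\mathrm{qdim}(S_1)=q+q^{-1}$ for the loop relation matches the paper's. However, for the crossing relation there are two genuine gaps.

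First, you state the Hecke-type identity $c_{S_1,S_1}=q^{1/2}\,\mathrm{id}-q^{-1/2}U$ where $U$ is ``the cap-cup endomorphism,'' but you don't derive it, and identifying $U$ with $(q+q^{-1})$ times the projector onto $S_0$ requires pinning down the self-duality isomorphism $S_1\cong S_1^*$ in the pivotal structure (this is where a sign can hide). The paper sidesteps this entirely by computing the braiding explicitly on the basis and obtaining the \emph{oriented} skein relation $q^{1/2}c_{S_1,S_1}-q^{-1/2}c_{S_1,S_1}^{-1}=(q-q^{-1})\,\mathrm{id}$, which involves only $c$ and $c^{-1}$ and no choice of cap-cup.

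Second, and more importantly, you identify the ``final sign bookkeeping'' as the main obstacle and then dismiss it as a standard fact. This bookkeeping is precisely where the proof lives, and the parity claim you rely on is not true in general. At a crossing, the identity smoothing and the cap-cup smoothing differ in component count by exactly one, but the crossing diagram $L_\times$ need not agree in parity with either of them: if the two arcs at the crossing belong to distinct components that are reconnected outside the crossing so that the ``external'' pairing of the four endpoints coincides with the pairing given by the crossing itself, then $\#L_\times$ differs in parity from \emph{both} smoothings. Moreover, your identity $c_{S_1,S_1}=q^{1/2}\,\mathrm{id}-q^{-1/2}U$ only applies locally when the two strands can be oriented the same way; at a self-crossing of a component, the two arcs are necessarily oppositely oriented, so the relation for $c_{S_1,S_1}$ does not apply directly. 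The paper's proof handles all this with an explicit case analysis: it first derives the relation for same-orientation crossings, then uses self-duality of $S_1$ to reverse an orientation and derive the relation for different-component crossings, and finally treats same-component crossings separately by inserting the twist $\theta_{S_1}^{-1}=q^{-3/2}\mathrm{id}=-\zeta^{-3}\mathrm{id}$. None of this appears in your proposal, and without it the verification that $f_\omega$ kills the Kauffman relations is not complete.
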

\begin{proof}

We need to see that $f_{\omega}$ satisfies the Kauffman relation at the $2N$-th root of unity $\zeta.$
Note that $S_1$ has quantum dimension $q+q^{-1},$ so $Z_N(M,\omega,L)$ is multiplied by $q+q^{-1}$ when adding a trivial component to $L.$ 
Let us now compute the braiding on $S_1\otimes S_1.$ Since $E^2$ and $F^2$ act as zero on $S_1,$ Formula \ref{eq:Rmatrix} reduces to 
$$R=q^{H\otimes H/2}(Id +(q-q^{-1})E\otimes F),$$
from which we get
$$c_{S_1,S_1}( e_0\otimes e_0)=q^{1/2}e_0\otimes e_0, \ c_{S_1,S_1}(e_1\otimes e_1)=q^{1/2}e_1\otimes e_1, $$
$$ c_{S_1,S_1}(e_0\otimes e_1)=q^{-1/2}e_1\otimes e_0, \ c_{S_1,S_1}( e_1\otimes e_0)= q^{-1/2}e_0\otimes e_1 + (q^{1/2}-q^{-3/2})e_1\otimes e_0.$$
and therefore
$$c_{S_1,S_1}^{-1}( e_0\otimes e_0)=q^{-1/2}e_0\otimes e_0, \ c_{S_1,S_1}^{-1}(e_1\otimes e_1)=q^{-1/2}e_1\otimes e_1, $$
$$ c_{S_1,S_1}^{-1}(e_0\otimes e_1)=(q^{-1/2}-q^{3/2})e_0\otimes e_1 + q^{1/2} e_1\otimes e_0, \ c_{S_1,S_1}^{-1}( e_1\otimes e_0)=q^{1/2}e_0 \otimes e_1 .$$
Therefore we get that $$q^{1/2}c_{S_1,S_1}-q^{-1/2}c_{S_1,S_1}^{-1}=(q-q^{-1})\mathrm{Id}_{S_1\otimes S_1}.$$
Recall that $\zeta=-q^{1/2}.$ We conclude that $f_{\omega}$ satisfies the skein relations:
\begin{center}
	\def\svgwidth{7cm}
	%% Creator: Inkscape 1.3 (0e150ed6c4, 2023-07-21), www.inkscape.org
%% PDF/EPS/PS + LaTeX output extension by Johan Engelen, 2010
%% Accompanies image file '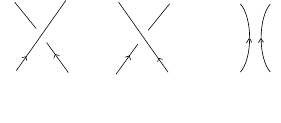' (pdf, eps, ps)
%%
%% To include the image in your LaTeX document, write
%%   \input{<filename>.pdf_tex}
%%  instead of
%%   \includegraphics{<filename>.pdf}
%% To scale the image, write
%%   \def\svgwidth{<desired width>}
%%   \input{<filename>.pdf_tex}
%%  instead of
%%   \includegraphics[width=<desired width>]{<filename>.pdf}
%%
%% Images with a different path to the parent latex file can
%% be accessed with the `import' package (which may need to be
%% installed) using
%%   \usepackage{import}
%% in the preamble, and then including the image with
%%   \import{<path to file>}{<filename>.pdf_tex}
%% Alternatively, one can specify
%%   \graphicspath{{<path to file>/}}
%% 
%% For more information, please see info/svg-inkscape on CTAN:
%%   http://tug.ctan.org/tex-archive/info/svg-inkscape
%%
\begingroup%
  \makeatletter%
  \providecommand\color[2][]{%
    \errmessage{(Inkscape) Color is used for the text in Inkscape, but the package 'color.sty' is not loaded}%
    \renewcommand\color[2][]{}%
  }%
  \providecommand\transparent[1]{%
    \errmessage{(Inkscape) Transparency is used (non-zero) for the text in Inkscape, but the package 'transparent.sty' is not loaded}%
    \renewcommand\transparent[1]{}%
  }%
  \providecommand\rotatebox[2]{#2}%
  \newcommand*\fsize{\dimexpr\f@size pt\relax}%
  \newcommand*\lineheight[1]{\fontsize{\fsize}{#1\fsize}\selectfont}%
  \ifx\svgwidth\undefined%
    \setlength{\unitlength}{134.58113543bp}%
    \ifx\svgscale\undefined%
      \relax%
    \else%
      \setlength{\unitlength}{\unitlength * \real{\svgscale}}%
    \fi%
  \else%
    \setlength{\unitlength}{\svgwidth}%
  \fi%
  \global\let\svgwidth\undefined%
  \global\let\svgscale\undefined%
  \makeatother%
  \begin{picture}(1,0.4371253)%
    \lineheight{1}%
    \setlength\tabcolsep{0pt}%
    \put(0,0){\includegraphics[width=\unitlength,page=1]{Kauffman.pdf}}%
    \put(-0.00038469,0.29623922){\color[rgb]{0,0,0}\makebox(0,0)[lt]{\lineheight{1.25}\smash{\begin{tabular}[t]{l}$\zeta$\end{tabular}}}}%
    \put(0.28730029,0.29088692){\color[rgb]{0,0,0}\makebox(0,0)[lt]{\lineheight{1.25}\smash{\begin{tabular}[t]{l}$-\zeta^{-1}$\end{tabular}}}}%
    \put(0.56083099,0.28999188){\color[rgb]{0,0,0}\makebox(0,0)[lt]{\lineheight{1.25}\smash{\begin{tabular}[t]{l}$=(\zeta^2-\zeta^{-2})$\end{tabular}}}}%
    \put(0,0){\includegraphics[width=\unitlength,page=2]{Kauffman.pdf}}%
    \put(0.22816229,0.05623876){\color[rgb]{0,0,0}\makebox(0,0)[lt]{\lineheight{1.25}\smash{\begin{tabular}[t]{l}$=-(\zeta^2+\zeta^{-2})$\end{tabular}}}}%
  \end{picture}%
\endgroup%

\end{center}
Note however that since $S_1$ is self dual, $f_{\omega}(L)$ does not depend on the orientations on the components of $L.$ If the two strands on the left hand side of the first skein relation belong to different components, then we get also:
\begin{center}
	\def\svgwidth{7cm}
	%% Creator: Inkscape 1.3 (0e150ed6c4, 2023-07-21), www.inkscape.org
%% PDF/EPS/PS + LaTeX output extension by Johan Engelen, 2010
%% Accompanies image file '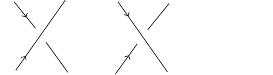' (pdf, eps, ps)
%%
%% To include the image in your LaTeX document, write
%%   \input{<filename>.pdf_tex}
%%  instead of
%%   \includegraphics{<filename>.pdf}
%% To scale the image, write
%%   \def\svgwidth{<desired width>}
%%   \input{<filename>.pdf_tex}
%%  instead of
%%   \includegraphics[width=<desired width>]{<filename>.pdf}
%%
%% Images with a different path to the parent latex file can
%% be accessed with the `import' package (which may need to be
%% installed) using
%%   \usepackage{import}
%% in the preamble, and then including the image with
%%   \import{<path to file>}{<filename>.pdf_tex}
%% Alternatively, one can specify
%%   \graphicspath{{<path to file>/}}
%% 
%% For more information, please see info/svg-inkscape on CTAN:
%%   http://tug.ctan.org/tex-archive/info/svg-inkscape
%%
\begingroup%
  \makeatletter%
  \providecommand\color[2][]{%
    \errmessage{(Inkscape) Color is used for the text in Inkscape, but the package 'color.sty' is not loaded}%
    \renewcommand\color[2][]{}%
  }%
  \providecommand\transparent[1]{%
    \errmessage{(Inkscape) Transparency is used (non-zero) for the text in Inkscape, but the package 'transparent.sty' is not loaded}%
    \renewcommand\transparent[1]{}%
  }%
  \providecommand\rotatebox[2]{#2}%
  \newcommand*\fsize{\dimexpr\f@size pt\relax}%
  \newcommand*\lineheight[1]{\fontsize{\fsize}{#1\fsize}\selectfont}%
  \ifx\svgwidth\undefined%
    \setlength{\unitlength}{130.62991766bp}%
    \ifx\svgscale\undefined%
      \relax%
    \else%
      \setlength{\unitlength}{\unitlength * \real{\svgscale}}%
    \fi%
  \else%
    \setlength{\unitlength}{\svgwidth}%
  \fi%
  \global\let\svgwidth\undefined%
  \global\let\svgscale\undefined%
  \makeatother%
  \begin{picture}(1,0.27291277)%
    \lineheight{1}%
    \setlength\tabcolsep{0pt}%
    \put(0,0){\includegraphics[width=\unitlength,page=1]{Kauffman2.pdf}}%
    \put(-0.00254176,0.12883797){\color[rgb]{0,0,0}\makebox(0,0)[lt]{\lineheight{1.25}\smash{\begin{tabular}[t]{l}$\zeta^{-1}$\end{tabular}}}}%
    \put(0.29384495,0.12332378){\color[rgb]{0,0,0}\makebox(0,0)[lt]{\lineheight{1.25}\smash{\begin{tabular}[t]{l}$-\zeta$\end{tabular}}}}%
    \put(0.57564924,0.12240163){\color[rgb]{0,0,0}\makebox(0,0)[lt]{\lineheight{1.25}\smash{\begin{tabular}[t]{l}$=(\zeta^{-2}-\zeta^2)$\end{tabular}}}}%
    \put(0,0){\includegraphics[width=\unitlength,page=2]{Kauffman2.pdf}}%
  \end{picture}%
\endgroup%

\end{center}
From this equation and the previous one, we can deduce that:
\begin{center}
	\def\svgwidth{5cm}
	%% Creator: Inkscape 1.3 (0e150ed6c4, 2023-07-21), www.inkscape.org
%% PDF/EPS/PS + LaTeX output extension by Johan Engelen, 2010
%% Accompanies image file '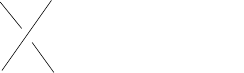' (pdf, eps, ps)
%%
%% To include the image in your LaTeX document, write
%%   \input{<filename>.pdf_tex}
%%  instead of
%%   \includegraphics{<filename>.pdf}
%% To scale the image, write
%%   \def\svgwidth{<desired width>}
%%   \input{<filename>.pdf_tex}
%%  instead of
%%   \includegraphics[width=<desired width>]{<filename>.pdf}
%%
%% Images with a different path to the parent latex file can
%% be accessed with the `import' package (which may need to be
%% installed) using
%%   \usepackage{import}
%% in the preamble, and then including the image with
%%   \import{<path to file>}{<filename>.pdf_tex}
%% Alternatively, one can specify
%%   \graphicspath{{<path to file>/}}
%% 
%% For more information, please see info/svg-inkscape on CTAN:
%%   http://tug.ctan.org/tex-archive/info/svg-inkscape
%%
\begingroup%
  \makeatletter%
  \providecommand\color[2][]{%
    \errmessage{(Inkscape) Color is used for the text in Inkscape, but the package 'color.sty' is not loaded}%
    \renewcommand\color[2][]{}%
  }%
  \providecommand\transparent[1]{%
    \errmessage{(Inkscape) Transparency is used (non-zero) for the text in Inkscape, but the package 'transparent.sty' is not loaded}%
    \renewcommand\transparent[1]{}%
  }%
  \providecommand\rotatebox[2]{#2}%
  \newcommand*\fsize{\dimexpr\f@size pt\relax}%
  \newcommand*\lineheight[1]{\fontsize{\fsize}{#1\fsize}\selectfont}%
  \ifx\svgwidth\undefined%
    \setlength{\unitlength}{117.0941883bp}%
    \ifx\svgscale\undefined%
      \relax%
    \else%
      \setlength{\unitlength}{\unitlength * \real{\svgscale}}%
    \fi%
  \else%
    \setlength{\unitlength}{\svgwidth}%
  \fi%
  \global\let\svgwidth\undefined%
  \global\let\svgscale\undefined%
  \makeatother%
  \begin{picture}(1,0.29802959)%
    \lineheight{1}%
    \setlength\tabcolsep{0pt}%
    \put(0,0){\includegraphics[width=\unitlength,page=1]{Kaufman3.pdf}}%
    \put(0.2616999,0.12933296){\color[rgb]{0,0,0}\makebox(0,0)[lt]{\lineheight{1.25}\smash{\begin{tabular}[t]{l}$= \zeta$\end{tabular}}}}%
    \put(0,0){\includegraphics[width=\unitlength,page=2]{Kaufman3.pdf}}%
    \put(0.60872845,0.12034001){\color[rgb]{0,0,0}\makebox(0,0)[lt]{\lineheight{1.25}\smash{\begin{tabular}[t]{l}$+\zeta^{-1}$\end{tabular}}}}%
    \put(0,0){\includegraphics[width=\unitlength,page=3]{Kaufman3.pdf}}%
  \end{picture}%
\endgroup%

\end{center}
So the first Kauffman bracket skein relation holds if the two strands on the left hand side belong to different components. If they belong to the same component, then the first Kauffman relation also holds, as we have:
\begin{center}
	\def\svgwidth{9cm}
	%% Creator: Inkscape 1.3 (0e150ed6c4, 2023-07-21), www.inkscape.org
%% PDF/EPS/PS + LaTeX output extension by Johan Engelen, 2010
%% Accompanies image file '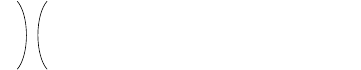' (pdf, eps, ps)
%%
%% To include the image in your LaTeX document, write
%%   \input{<filename>.pdf_tex}
%%  instead of
%%   \includegraphics{<filename>.pdf}
%% To scale the image, write
%%   \def\svgwidth{<desired width>}
%%   \input{<filename>.pdf_tex}
%%  instead of
%%   \includegraphics[width=<desired width>]{<filename>.pdf}
%%
%% Images with a different path to the parent latex file can
%% be accessed with the `import' package (which may need to be
%% installed) using
%%   \usepackage{import}
%% in the preamble, and then including the image with
%%   \import{<path to file>}{<filename>.pdf_tex}
%% Alternatively, one can specify
%%   \graphicspath{{<path to file>/}}
%% 
%% For more information, please see info/svg-inkscape on CTAN:
%%   http://tug.ctan.org/tex-archive/info/svg-inkscape
%%
\begingroup%
  \makeatletter%
  \providecommand\color[2][]{%
    \errmessage{(Inkscape) Color is used for the text in Inkscape, but the package 'color.sty' is not loaded}%
    \renewcommand\color[2][]{}%
  }%
  \providecommand\transparent[1]{%
    \errmessage{(Inkscape) Transparency is used (non-zero) for the text in Inkscape, but the package 'transparent.sty' is not loaded}%
    \renewcommand\transparent[1]{}%
  }%
  \providecommand\rotatebox[2]{#2}%
  \newcommand*\fsize{\dimexpr\f@size pt\relax}%
  \newcommand*\lineheight[1]{\fontsize{\fsize}{#1\fsize}\selectfont}%
  \ifx\svgwidth\undefined%
    \setlength{\unitlength}{173.14160084bp}%
    \ifx\svgscale\undefined%
      \relax%
    \else%
      \setlength{\unitlength}{\unitlength * \real{\svgscale}}%
    \fi%
  \else%
    \setlength{\unitlength}{\svgwidth}%
  \fi%
  \global\let\svgwidth\undefined%
  \global\let\svgscale\undefined%
  \makeatother%
  \begin{picture}(1,0.19258202)%
    \lineheight{1}%
    \setlength\tabcolsep{0pt}%
    \put(0,0){\includegraphics[width=\unitlength,page=1]{Kauffman4.pdf}}%
    \put(-0.00191768,0.0922552){\color[rgb]{0,0,0}\makebox(0,0)[lt]{\lineheight{1.25}\smash{\begin{tabular}[t]{l}$\zeta$\end{tabular}}}}%
    \put(0.13428505,0.09225526){\color[rgb]{0,0,0}\makebox(0,0)[lt]{\lineheight{1.25}\smash{\begin{tabular}[t]{l}$=\zeta$\end{tabular}}}}%
    \put(0,0){\includegraphics[width=\unitlength,page=2]{Kauffman4.pdf}}%
    \put(0.31855937,0.09168294){\color[rgb]{0,0,0}\makebox(0,0)[lt]{\lineheight{1.25}\smash{\begin{tabular}[t]{l}$=\zeta^2$\end{tabular}}}}%
    \put(0,0){\includegraphics[width=\unitlength,page=3]{Kauffman4.pdf}}%
    \put(0.520002,0.09397211){\color[rgb]{0,0,0}\makebox(0,0)[lt]{\lineheight{1.25}\smash{\begin{tabular}[t]{l}$+$\end{tabular}}}}%
    \put(0,0){\includegraphics[width=\unitlength,page=4]{Kauffman4.pdf}}%
    \put(0.62644615,0.08538786){\color[rgb]{0,0,0}\makebox(0,0)[lt]{\lineheight{1.25}\smash{\begin{tabular}[t]{l}$=-\zeta^{-1}$\end{tabular}}}}%
    \put(0.85707516,0.09855033){\color[rgb]{0,0,0}\makebox(0,0)[lt]{\lineheight{1.25}\smash{\begin{tabular}[t]{l}$+$\end{tabular}}}}%
  \end{picture}%
\endgroup%

\end{center}
where we get the last equality as $\theta_{S_1}^{-1}=q^{-3/2}Id_{S_1}=-\zeta^{-3}Id_{S_1}.$
\end{proof}

\begin{proposition}\label{prop:classicalShadow}
	The map $f_{\omega}$ has classical shadow the abelian representation 
	$$\rho_{\omega}: \gamma\in \pi_1(M) \longrightarrow \begin{pmatrix}
	e^{4i\pi \omega(\gamma)} & 0 \\ 0 & e^{-4i\pi \omega(\gamma)}
	\end{pmatrix}.$$
\end{proposition}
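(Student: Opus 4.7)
The plan is to reduce the classical-shadow identity to a concrete statement about $Z_N$ via sign bookkeeping, use a Grothendieck-ring simplification of $T_N(S_1)$ to replace this complicated color by a sum of one-dimensional modules, and finally evaluate the result using the abelian character these one-dimensional modules produce.

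First I would expand $f_\omega(L\cup T_N(K))$ by multilinearity of $Z_N$ in the color of the $K$-strand. Since $N$ is odd, $T_N(-z)=-T_N(z)$, so the sign $(-1)^{\#L+k}$ appearing in the definition of $f_\omega$ combines with the coefficients of $T_N$ to produce an overall sign $-(-1)^{\#L}$ together with a coloring of $K$ by $T_N(S_1)$, reducing the desired identity to
\[Z_N(M,\omega,L(S_1)\cup K(T_N(S_1))) = \bigl(e^{4i\pi\omega(K)}+e^{-4i\pi\omega(K)}\bigr)\, Z_N(M,\omega,L).\]
After fixing a surgery presentation $L_M\subset S^3$ of $M$ and a representative $\widetilde{K}\subset S^3\setminus L_M$ of $K$, Theorem \ref{thm:3-mfdInv} expands the left-hand side into a Kirby sum over $\vec{j}\in\Z^n$ in which each $L_{M,i}$ carries the color $V_{\alpha_i}$ with $\alpha_i=\omega(m_i)+j_i$. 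The algebraic input is the identity $[T_N(S_1)]=[\varepsilon^4]+[\varepsilon^{-4}]$ in the Grothendieck ring $G_0(\mathcal{C})$, which follows at once from comparing formal characters since $T_N(X+X^{-1})=X^N+X^{-N}=\chi_{\varepsilon^4}(X)+\chi_{\varepsilon^{-4}}(X)$. Because every $U_q^H\mathrm{sl}_2$-intertwiner preserves submodule filtrations and the quantum trace is additive on short exact sequences, $F_N$ factors through $G_0$ in the color of the non-distinguished component $\widetilde{K}$, and we may replace $T_N(S_1)$ by $\varepsilon^4\oplus\varepsilon^{-4}$ inside $F_N$.

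To evaluate the $\varepsilon^{\pm4}$ contributions I would use that each such module is one-dimensional with $\dim_q(\varepsilon^{\pm 4})=1$ and $\theta_{\varepsilon^{\pm 4}}=1$, and that a direct computation from $R=q^{H\otimes H/2}$ gives squared braiding $e^{\pm 4i\pi\alpha}\,\mathrm{Id}$ against $V_\alpha$ and the trivial scalar against $S_1$ (the latter because $q^{\pm N}=1$). Hence $\widetilde{K}$ colored by $\varepsilon^{\pm 4}$ contributes the abelian-character factor $\prod_i e^{\pm 4i\pi\alpha_i\, lk(\widetilde{K},L_{M,i})}$; substituting $\alpha_i=\omega(m_i)+j_i$ and using $e^{4i\pi\Z}=\{1\}$ collapses this to $e^{\pm 4i\pi\omega(K)}$, independently of $\vec{j}$. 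Summing the two contributions yields the factor $\chi_{\rho_\omega}(K)$, which then pulls out of the Kirby sum and the surgery normalization to give the claimed identity. The principal technical obstacle is establishing the Grothendieck-ring factorization of $F_N$ in the color of $\widetilde{K}$: because $F_N$ mixes a modified trace at the distinguished component with ordinary quantum traces around the other components, one must carefully verify that additivity of the quantum trace along composition-factor sequences remains compatible with this construction, particularly in the presence of the non-semisimple indecomposable summands appearing inside $S_1^{\otimes N}$.
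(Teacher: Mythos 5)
Your proof is correct, but it takes a genuinely different route from the paper's. The paper first uses the fact that $f_\omega$ factors through $S_\zeta(M)$ (hence is an isotopy invariant, by L\^e's theorem) to shrink $K$ to a small unknot encircling a bundle of strands; it then observes that the fused color of those strands is a projective object of $\mathcal{C}_{\omega(K)}$ because projectives form a tensor ideal, and finally reads off the scalar from the explicit Hopf-link formulas $\Phi_{T_N(S_1),V_\alpha}$ and $\Phi_{T_N(S_1),P_n}$ of Lemma \ref{lemma:HopfChebychev}. Your argument instead fixes a surgery presentation and replaces $T_N(S_1)$ by $\varepsilon^4\oplus\varepsilon^{-4}$ via the Grothendieck-ring identity $[T_N(S_1)]=[\varepsilon^4]+[\varepsilon^{-4}]$ (which does follow from characters, since the character map on $G_0(\mathcal{C})$ is injective because the simples $V_\alpha$, $S_n\otimes\varepsilon^l$ have pairwise-distinct highest weights). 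The one-dimensional colors then reduce the whole computation to linking-number bookkeeping, which makes the appearance of the abelian character $\gamma\mapsto e^{\pm 4i\pi\omega(\gamma)}$ structurally transparent: it is literally the braiding eigenvalue of the invertible object $\varepsilon^{\pm4}$. This bypasses both the isotopy-reduction step and the case-by-case content of Lemma \ref{lemma:HopfChebychev}, at the cost of needing that the CGP invariant is additive along short exact sequences in the color of a \emph{non-distinguished} component. You correctly isolate this as the key point; it does hold, by naturality of braidings, twists and (co)evaluations together with additivity of the ordinary quantum trace on composition series, and it is essential that the decomposed component is not the one carrying the modified trace $d$. (Your worry about ``non-semisimple indecomposable summands inside $S_1^{\otimes N}$'' is not really an obstruction: passing to $G_0$ forgets extension data entirely, which is exactly why this works.) The sign bookkeeping via $T_N(-z)=-T_N(z)$ for odd $N$ also matches the paper's normalization. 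Overall this is a valid and arguably more conceptual alternative proof.
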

\begin{proof}
We need to show that for $L$ a framed link in $M$ and $K$ a framed knot we have
$$f_{\omega}(L \cup T_N(K))=-\mathrm{Tr} \rho_{\omega}(K)f_{\omega}(L).$$
Notice that since $f_{\omega}$ induces a map $S_{\zeta}(M) \longrightarrow \C,$ it follows from \cite[Theorem 2]{Le15} that $f_{\omega}(L\cup T_N(K))$ depends only on the isotopy class of $K$ in $M.$ Therefore, we can assume that $K$ is a trivial knot in $S^3,$ possibly linked with the surgery link $L_M$ for $M$ and the link $L.$ Furthermore, if $K$ is not linked with $L_M$ then we can assume $K$ is a trivial knot in $S^3\setminus (L_M \cup K).$ In the latter case, the proposition follows as the quantum dimension of $T_N(S_1)$ is $T_N(q+q^{-1})=q^N+q^{-N}=2.$

Otherwise, the disk bounded by $K$ will link some strands of $L_M$ and $L,$ and at least one strand colored by a projective color $V_{\alpha}.$
However, projective objects in $\mathcal{C}$ form an ideal for the tensor product operation by \cite[Lemma 17]{GPV}, since this category is pivotal. Moreover, the total sum of colors of strand circled by $K$ is exactly $\omega(K)$ mod $\Z,$ since $S_1\in \mathcal{C}_{\overline{0}}$ and the color on a component $L_{M,i}$ of the surgery link has color $\Omega_{\omega(\mu_i)} \in \mathcal{C}_{\omega(\mu_i)},$ where $\mu_i$ is the meridian of $L_{M,i}.$ 

Thus we only need to consider the case where $K$ circles one strand colored by a projective object in $\mathcal{C}_{\omega(K)}.$ However, from Lemma \ref{lemma:HopfChebychev} and the fact the $N$-th Chebychev is an odd polynomial, we get that $$f_{\omega}(L\cup T_N(K))=-(e^{4i\pi\omega(K)}+e^{-4i\pi\omega(K)})f_{\omega}(L),$$
which concludes the proof.	
\end{proof}

The remainder of this section is devoted to the proof of the following proposition:
\begin{proposition}\label{prop:surjective}
	The map $f_{\omega}:S_{\zeta}(M)\longrightarrow \C$ is surjective for any non integral cohomology class $\omega.$
\end{proposition}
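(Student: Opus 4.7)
The plan is to reduce surjectivity to the non-vanishing of a single value of $f_{\omega}$, and then to produce such a non-zero value via an explicit Gauss-sum computation. Since the codomain $\C$ is one-dimensional, $f_{\omega}$ is surjective if and only if it is not identically zero; equivalently, I must exhibit a framed link $L\subset M$ with $Z_N(M,\omega,L)\ne 0$. By the definition of $Z_N$ from Theorem \ref{thm:3-mfdInv} and the fact that $\Delta_{\pm}\ne 0$, this amounts to finding some $L$ such that
$$F_N\Bigl(\bigcup_{i=1}^n (L_{M,i},\Omega_{\omega(\mu_i)})\cup L\Bigr)\ne 0$$
for some computable surgery presentation $L_M=L_{M,1}\cup\cdots\cup L_{M,n}$ of $M$.

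First, I would fix a computable surgery presentation $L_M$ of $M$, arranging via non-integrality of $\omega$ (and, if needed, handle-slides as in \cite{CGP14}) that $\omega(\mu_i)\notin\tfrac{1}{4}\Z/\Z$ for every meridian $\mu_i$. Second, I would take $L=\emptyset$ as the natural first candidate and expand each Kirby color as
$$\Omega_{\omega(\mu_i)}=\sum_{j_i=0}^{N-1} d(\omega(\mu_i)+j_i)\, V_{\omega(\mu_i)+j_i},$$
so that $F_N$ becomes a multi-sum over $(j_1,\ldots,j_n)\in\{0,\ldots,N-1\}^n$. Each summand is computable using Lemma \ref{lemma:Hopf} and the twist from equation \eqref{eq:twist}: writing $\alpha_i=\omega(\mu_i)+j_i$, the Hopf pairings between linked components contribute factors of the form $q^{\alpha_i\alpha_k\,\mathrm{lk}(L_{M,i},L_{M,k})}$, framings contribute twist factors of the form $q^{-\alpha_i^2\,\mathrm{fr}(L_{M,i})}$, and the $d(\alpha_i)$'s enter as overall scalars. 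Third, I would carry out the summation over each $j_i\in\{0,\ldots,N-1\}$: since $q$ is a primitive $N$-th root of unity, each partial sum has the form of a discrete quadratic Gauss sum $\sum_{j} q^{aj+bj^2}$ in $j_i$, whose closed-form evaluation is non-zero precisely when the exponents involve $\omega(\mu_i)\notin\tfrac{1}{4}\Z/\Z$. The non-integrality of $\omega$ therefore ensures non-vanishing factor by factor.

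The main obstacle will be carrying out the Gauss-sum collapse cleanly for a general, possibly non-diagonal linking matrix: the cross-terms $q^{\alpha_i\alpha_k\,\mathrm{lk}(L_{M,i},L_{M,k})}$ couple the summations over different $j_i$'s and must be disentangled before the one-dimensional Gauss sums can be invoked. A secondary worry is that the choice $L=\emptyset$ could accidentally yield zero for some exceptional $(M,\omega)$; in that event I would take $L$ to be a knot $K_{\gamma}\subset M$ representing a class $\gamma\in\pi_1(M)$ with $\omega(\gamma)\notin\tfrac{1}{4}\Z/\Z$, so that the extra Hopf pairings with $K_{\gamma}$ shift the Gauss-sum exponents by multiples of $\omega(\gamma)$ and break any accidental cancellation. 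A finite list of such modifications should therefore suffice to handle every non-integral $\omega$.
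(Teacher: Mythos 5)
Your reduction to showing that $f_{\omega}$ is not identically zero is correct, and your instinct to exhibit a non-zero value of $F_N$ on a surgered link colored by Kirby colors is also the right starting point. However, the heart of your argument does not go through, for two reasons.

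First, your formula for $F_N$ is wrong for a general surgery link. You assert that, after expanding each $\Omega_{\omega(\mu_i)}$, the contribution of a tuple of colors $V_{\alpha_1},\dots,V_{\alpha_n}$ to $F_N$ is a product of ``Hopf pairings'' $q^{\alpha_i\alpha_k\,\mathrm{lk}(L_{M,i},L_{M,k})}$ and twist factors $q^{-\alpha_i^2\,\mathrm{fr}(L_{M,i})}$. That expression is only the value on a chain of Hopf links (or, more generally, reflects the $U(1)$-abelian invariant). The Reshetikhin--Turaev evaluation of a general $\mathcal{C}$-colored link is \emph{not} determined by the linking matrix; it involves the full $R$-matrix of the $N$-dimensional modules $V_{\alpha}$, and for most links it does not factor into pairwise terms. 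So the multi-Gauss-sum you propose to evaluate is not $F_N(L_M,\Omega)$, and the link between non-integrality of $\omega$ and non-vanishing of the correct sum is never actually established; your phrase ``ensures non-vanishing factor by factor'' is an assertion, not a computation. The fallback of adding knots $K_{\gamma}$ to break accidental cancellations is similarly undeveloped and inherits the same difficulty.

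Second, and more structurally, you do not exploit the freedom the statement actually gives you: $f_{\omega}$ is a linear functional on $S_{\zeta}(M)$, so you may add to the picture any skein element, i.e.\ any link colored by arbitrary polynomials in $S_1$. The paper's proof uses exactly this freedom twice. It first adds a meridian $L'$ around each surgery component $L_{M,i}$ colored by a Lagrange-interpolation polynomial $Q_{\omega(\mu_i)}(S_1)$ (Lemma~\ref{lemma:colorSelect}) so that $\Phi_{Q_{\alpha}(S_1),\Omega_{\alpha}}=Id_{V_{\alpha}}$ --- this collapses each Kirby color $\Omega_{\alpha}$ to a single summand $V_{\alpha}$, removing your sum over $(j_1,\dots,j_n)$ entirely. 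It then adds small circles $L''$ around chosen crossings, colored by Hermite-interpolation polynomials $R_{\alpha,\beta,\pm}(S_1)$ (Lemma~\ref{lemma:crossingChange}), realizing $(c_{V_{\alpha},V_{\beta}})^{\pm2}$; this lets one change crossings of $L_M$ until it becomes a chain Hopf link. The resulting evaluation is an explicit product of Hopf-link values $q^{\alpha\beta}/d(\beta)$, which is manifestly non-zero by Lemma~\ref{lemma:Hopf}. In short: rather than trying to show an intractable sum is non-zero, the paper engineers auxiliary skeins so that only one obviously non-zero term survives. You would need a mechanism of this kind --- something that both kills the topological complexity of $L_M$ and the sum over Kirby colors --- before any version of your Gauss-sum endgame can start.
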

We will need a few preparatory lemmas.
\begin{lemma}\label{lemma:colorSelect}
	For any $\alpha \in \C\setminus \frac{1}{4}\Z,$ there exists a polynomial $Q_{\alpha}(z)\in \C[z]$ such that
	$$\Phi_{Q_{\alpha}(S_1),\Omega_{\alpha}}=Id_{V_{\alpha}}.$$
\end{lemma}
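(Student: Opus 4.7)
The plan is to reduce the statement to a Lagrange interpolation problem. By Lemma \ref{lemma:Hopf}, for any polynomial $Q \in \C[z]$ and any $\beta \in \overset{..}{\C}$,
$$\Phi_{Q(S_1),V_{\beta}} = Q(q^{\beta}+q^{-\beta})\, Id_{V_{\beta}},$$
because the Hopf link invariant is polynomial in the $S_1$-colored strand. Expanding the definition $\Omega_{\alpha}=\sum_{i=0}^{N-1} d(\alpha+i)\,V_{\alpha+i}$ therefore yields
$$\Phi_{Q_{\alpha}(S_1),\Omega_{\alpha}} = \sum_{i=0}^{N-1} d(\alpha+i)\,Q_{\alpha}(q^{\alpha+i}+q^{-\alpha-i})\, Id_{V_{\alpha+i}}.$$
So asking for the identity to hold reduces to finding a polynomial $Q_{\alpha}$ that takes the value $1/d(\alpha)$ at the point $x_0=q^{\alpha}+q^{-\alpha}$ and vanishes at $x_i=q^{\alpha+i}+q^{-\alpha-i}$ for $i=1,\ldots,N-1$.

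First I would check that $d(\alpha+i)\neq 0$ for all $i\in\{0,\ldots,N-1\}$: from $d(\beta)=\{\beta\}/\{N\beta\}$, both numerator and denominator vanish only when $\beta\in \frac{N}{4}\Z$ or $\beta\in\frac{1}{4}\Z$ respectively, and since $\alpha\notin \frac{1}{4}\Z$ implies $\alpha+i\notin\frac{1}{4}\Z$ (and a fortiori $\alpha+i\notin\frac{N}{4}\Z\subset\frac{1}{4}\Z$), there is nothing to worry about.

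The main point is to verify that the $N$ interpolation nodes $x_0,\ldots,x_{N-1}$ are pairwise distinct. Suppose $x_i=x_j$ for some $i\neq j$ in $\{0,\ldots,N-1\}$. Since $x_i$ and $x_j$ determine the same unordered pair $\{q^{\alpha+i},q^{-\alpha-i}\}$ and $\{q^{\alpha+j},q^{-\alpha-j}\}$, we must have either $q^{\alpha+i}=q^{\alpha+j}$, i.e.\ $q^{i-j}=1$, or $q^{\alpha+i}=q^{-\alpha-j}$, i.e.\ $q^{2\alpha+i+j}=1$. The first case is impossible because $q$ is a primitive $N$-th root of unity and $0<|i-j|<N$. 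The second case forces $e^{4i\pi(2\alpha+i+j)/N}=1$, i.e.\ $4\alpha+2(i+j)\in N\Z$, and since $i+j\in\Z$ and $N\in\Z$, this would give $4\alpha\in\Z$, contradicting $\alpha\notin\frac{1}{4}\Z$. This distinctness step is the only place the hypothesis is used, and it is the key technical point of the lemma.

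Finally, since the $N$ nodes $x_i$ are distinct, Lagrange interpolation over $\C$ produces a polynomial $Q_{\alpha}\in\C[z]$ of degree at most $N-1$ taking the required values at $x_0,\ldots,x_{N-1}$, and this polynomial satisfies $\Phi_{Q_{\alpha}(S_1),\Omega_{\alpha}}=Id_{V_{\alpha}}$ by the computation above. No further subtlety is expected, as there is no requirement on the degree or special structure of $Q_{\alpha}$.
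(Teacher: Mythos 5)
Your proof is correct and takes essentially the same route as the paper: reduce via Lemma \ref{lemma:Hopf} to prescribing the values of $Q_\alpha$ at the $N$ points $q^{\alpha+i}+q^{-\alpha-i}$, check that these nodes are pairwise distinct (which uses $\alpha\notin\frac14\Z$), and invoke Lagrange interpolation. The only difference is that you spell out the distinctness argument in detail, whereas the paper merely asserts it; your verification is accurate.
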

\begin{proof}
	Since $\Omega_{\alpha}=\underset{i=0}{\overset{N-1}{\sum}}d(\alpha+i)V_{\alpha+i},$ by Lemma \ref{lemma:Hopf}, this reduces to finding a polynomial $Q_{\alpha}(z)$ in $\C[z]$ such that $Q_{\alpha}(q^{\alpha}+q^{\alpha})=\frac{1}{d(\alpha)}$ and $$\forall i\in \lbrace 1,\ldots,N-1\rbrace, \ Q_{\alpha}(q^{\alpha+i}+q^{-\alpha-i})=0.$$ Note that the complex numbers $q^{\alpha+i}+q^{-\alpha-i}=2\cos(4\pi(\alpha+i)/N)$ are all distinct for $i=0,\ldots,N-1$ when $\alpha\notin \frac{1}{4}\Z.$ 
	
	Hence there exists such a polynomial by standard Lagrange interpolation.
\end{proof}
\begin{lemma}\label{lemma:crossingChange}
	For any $\alpha,\beta \in \C \setminus \frac{1}{4}\Z$ there exists polynomials $R_{\alpha,\beta,+},R_{\alpha,\beta,-}\in \C[z]$ such that  
	$$\Phi_{R_{\alpha,\beta\pm}(S_1),V_{\alpha}\otimes V_{\beta}}=\left(c_{V_{\alpha},V_{\beta}}\right)^{\pm 2}.$$
\end{lemma}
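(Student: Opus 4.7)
The plan is to diagonalize both sides of the desired identity on the indecomposable decomposition of $V_\alpha\otimes V_\beta$ supplied by Lemma \ref{lemma:tensorprod2}, and reduce to polynomial interpolation. Throughout, $(c_{V_\alpha,V_\beta})^{\pm 2}$ denotes the $\pm$-power of the double braiding $c_{V_\beta,V_\alpha}\circ c_{V_\alpha,V_\beta}$, an endomorphism of $V_\alpha\otimes V_\beta$. Both this operator and $\Phi_{P(S_1),V_\alpha\otimes V_\beta}$ are morphisms in $\mathcal{C}$, hence preserve any such decomposition.

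Consider first the generic case $\alpha+\beta\notin\frac{1}{4}\Z$. By Lemma \ref{lemma:tensorprod2}, $V_\alpha\otimes V_\beta\simeq\bigoplus_{k\in H_N}V_{\alpha+\beta+k}$ as a sum of pairwise distinct simple modules. Schur's lemma forces $(c_{V_\alpha,V_\beta})^{\pm 2}$ to act as some scalar $\mu_k^{\pm}\in\C$ on $V_{\alpha+\beta+k}$ (computable from the ribbon twist on each summand, via $\theta_{V_\alpha\otimes V_\beta}=c_{V_\beta,V_\alpha}\circ c_{V_\alpha,V_\beta}\circ(\theta_{V_\alpha}\otimes\theta_{V_\beta})$). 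On the other hand, Lemma \ref{lemma:Hopf} gives, for any polynomial $P\in\C[z]$, that $\Phi_{P(S_1),V_{\alpha+\beta+k}}=P(q^{\alpha+\beta+k}+q^{-\alpha-\beta-k})\,Id$. Matching yields $N$ interpolation conditions on $P$ at the nodes $2\cos(4\pi(\alpha+\beta+k)/N)$ for $k\in H_N$, which are pairwise distinct precisely because $\alpha+\beta\notin\frac{1}{4}\Z$, as in the proof of Lemma \ref{lemma:colorSelect}. Lagrange interpolation then produces $R_{\alpha,\beta,\pm}$.

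When $\alpha+\beta\in\frac{1}{4}\Z$ the decomposition of $V_\alpha\otimes V_\beta$ involves projective summands, and by Lemma \ref{lemma:endPn} the operator $(c_{V_\alpha,V_\beta})^{\pm 2}$ may have both a scalar part $\lambda_n^{\pm}Id$ and a nilpotent part $\nu_n^{\pm}h_{2n}$ on each $P_{2n}$. Using $h_{2n}^2=0$ together with Lemma \ref{lemma:Hopf}, one computes $\Phi_{P(S_1),P_{2n}}=P(q^{2n+1}+q^{-2n-1})Id+P'(q^{2n+1}+q^{-2n-1})(q-q^{-1})^2 h_{2n}$, which produces two interpolation conditions (on $P$ and $P'$) per projective summand and one per simple summand. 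These can be simultaneously satisfied by Hermite interpolation since the nodes are distinct and $(q-q^{-1})^2\neq 0$ when $N\geq 5$ is odd. The main obstacle is simply the bookkeeping of this degenerate case; in particular, one must verify that each indecomposable summand of $V_\alpha\otimes V_\beta$ appears with multiplicity one, which can be read off from the characters as in the proof of Lemma \ref{lemma:tensorprod2}.
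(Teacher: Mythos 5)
Your proposal follows essentially the same route as the paper: decompose $V_\alpha\otimes V_\beta$ via Lemma \ref{lemma:tensorprod2}, relate the double braiding to the ribbon twist, reduce to matching the Hopf-link endomorphism $\Phi_{P(S_1),-}$ on each summand, and conclude by Lagrange interpolation in the generic case $\alpha+\beta\notin\frac14\Z$ and Hermite interpolation when projective summands $P_{2n}$ appear. The only small imprecision is in the degenerate case: the projective summands occur twisted by the invertible module $\varepsilon^k$ (with $\alpha+\beta=\tfrac{k}{4}$), so the interpolation nodes are $(-1)^k(q^{2n+1}+q^{-2n-1})$ rather than $q^{2n+1}+q^{-2n-1}$ as you wrote; this sign does not affect distinctness of the nodes and hence not the conclusion, but it should be carried through.
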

\begin{proof}
	If $\alpha +\beta \notin \frac{1}{4}\Z,$ then by Lemma \ref{lemma:tensorprod2}, 
	$$V_{\alpha}\otimes V_{\beta}\simeq \underset{k\in H_N}{\bigoplus} V_{\alpha+\beta+k}.$$  Furthermore, all the modules in this decomposition are simple.
	
	It is therefore enough to show that there is a polynomial $R_{\alpha,\beta,+} \in \C[z]$ such that $\Phi_{R_{\alpha,\beta,+}(S_1),V_{\gamma}}=\theta_{V_{\gamma}}$ for all $\gamma \in \alpha+\beta +H_r.$
	Note that $\theta_{V_{\gamma}}$ is proportional to $Id_{V_{\gamma}}$ since $V_{\gamma}$ is simple and that $\Phi_{S_1,V_{\gamma}}=(q^{\gamma}+q^{-\gamma})Id_{V_{\gamma}}$ by Lemma \ref{lemma:Hopf}.  We also claim that the $(q^{\gamma}+q^{-\gamma})=2\cos(4\pi\gamma/N)$ are all distinct for $\gamma \in \alpha+\beta+H_N,$ which would imply that this case follows from Lemma \ref{lemma:Hopf} and Lagrange interpolation.
	
	Indeed for $k,l\in H_N,$
	$$\cos(4\pi(\alpha+\beta+k)/N)=\cos(4\pi(\alpha+\beta+l)/N) \Leftrightarrow k=l \ \mathrm{mod} \ \frac{N}{2} \ \textrm{or} \ (\alpha+\beta+k)=-(\alpha+\beta+l) \ \mathrm{mod} \ \frac{N}{2}.$$
	Note that the second condition would imply that $\alpha+\beta \in \frac{1}{4}\Z,$ which we excluded, and the first condition implies that $k=l,$ since $N$ is odd.

	Now if $\alpha+\beta \in \frac{1}{4}\Z,$ let us write $\alpha+\beta=\frac{k}{4}$ for some $k\in \Z.$ 
	By Lemma \ref{lemma:tensorprod2}, 
	$$V_{\alpha}\otimes V_{\beta}\simeq \underset{n=0}{\overset{\frac{N-1}{2}}{\bigoplus}} P_{2n}\otimes \varepsilon^k,$$
	where we use the convention $P_{N-1}=V_0.$
	Note that $c_{\varepsilon^k,S_1} \circ c_{S_1,\varepsilon^k}=(-1)^k Id_{S_1\otimes \varepsilon^k}.$
	Hence, it suffices to find $R_{\alpha,\beta,+}\in \C[z]$ such that for any $n\in \lbrace 0, \ldots, (N-1)/2 \rbrace,$
	$$\Phi_{R_{\alpha,\beta,+}(S_1),P_{2n} \otimes \varepsilon^k}=\theta_{P_{2n}\otimes \varepsilon^k}.$$
	Note that $\Phi_{S_1,P_n\otimes \varepsilon^k}=(-1)^k(q^{n+1}+q^{-n-1})Id_{P_n\otimes \varepsilon^k} +(-1)^k(q-q^{-1})^2 (h_i\otimes Id_{\varepsilon^k})$ by Lemma \ref{lemma:Hopf}. Assume that $\theta_{P_n\otimes \varepsilon^k}=\mu_n Id_{P_n\otimes \varepsilon^k} +\nu_n (h_n\otimes Id_{\varepsilon^k})$ for some constants $\mu_n,\nu_n \in \C,$ what we want is then to find a polynomial $R_{\alpha,\beta,+}\in \C[z]$ such that for any $n\in \lbrace 0,\ldots,(N-3)/2\rbrace:$
	$$R_{\alpha,\beta,+}((-1)^k(q^{2n+1}+q^{-2n-1}))=\mu_n \ \textrm{and} \ R'_{\alpha,\beta,+}((-1)^k(q^{2n+1}+q^{-2n-1}))=\frac{\nu_n}{(q-q^{-1})^2}.$$
	and 
	$$R_{\alpha,\beta,+}((-1)^k(q^N+q^{-N}))=\mu_{N-1}.$$
	This is possible by Hermite interpolation, since the $(-1)^k(q^{2n+1}+q^{-2n-1})$ are all distinct for $n\in \lbrace 0, \ldots, (N-1)/2 \rbrace.$
\end{proof}

\begin{proof}[Proof of Proposition \ref{prop:surjective}]
	Let $L_M$ be a computable surgery presentation for $(M,\omega),$ and consider a diagram of $L_M.$ Up to isotopy, we can assume that there are crossings between each pair of components of $L_M.$ 
	
		We need to show that there is a framed link $L$ in $M$ such that $F_N(L_M\cup L)\neq 0,$ where $L_M$ is colored by Kirby colors as in Theorem \ref{thm:3-mfdInv} and $L$ is colored by $S_1.$ Equivalently, one can search for such a link colored by polynomials in $S_1.$

	We start by adding to $L_M$ a link $L'$ consisting of one meridian of each component of $L_M,$ where the meridian of the component $L_i$ is colored by $Q_{\alpha}$ if $L_i$ is colored by $\Omega_{\alpha}.$
	
	By Lemma \ref{lemma:colorSelect}, this reduces to showing that there is a link $L''$ such that $F_N(L_M' \cup L'') \neq 0,$ such that $L_M'$ is the link $L_M$ except components are now colored by modules $V_{\alpha}$ instead of the Kirby colors $\Omega_{\alpha},$ and $L''$ is colored by polynomials in $S_1.$ 
	
	Note that $L_M$ can be turned into a chain Hopf link (with some framing on components) by a sequence of crossing changes. By Lemma \ref{lemma:crossingChange}, we can realized those crossing changes by adding trivial circles around the crossings colored by some $R_{\alpha,\beta,\pm }(S_1).$ We take $L''$ to be the corresponding colored link.
	
	Now we have that $F_N(L_M \cup L' \cup L'')$ is equal to the invariant of a chain Hopf link (possibly with non-zero framing) colored by modules $V_{\alpha}$ with $\alpha \in \C\setminus \frac{1}{4}\Z.$ However, the invariant of such an Hopf link is non zero by the second equation of Lemma \ref{lemma:Hopf}. 

\end{proof}	
	
\begin{proof}[Proof of Theorem \ref{thm:main}]
	Theorem \ref{thm:main} is now a direct consequence of Proposition \ref{prop:classicalShadow} and \ref{prop:surjective}.
\end{proof}

\bibliographystyle{hamsalpha}
\bibliography{biblio}
\end{document}